\theoremstyle{plain}
\newtheorem{theorem}{Theorem}[section]
\newtheorem{lemma}[theorem]{Lemma}
\newtheorem{proposition}[theorem]{Proposition}
\newtheorem{corollary}[theorem]{Corollary}
\theoremstyle{definition}
\newtheorem{definition}[theorem]{Definition}
\theoremstyle{remark}
\newenvironment{eqcond}{\begin{enumerate}}{\end{enumerate}}
\newcommand{\Rw}{\Rightarrow}
\newcommand{\hrw}{\hookrightarrow}
\newcommand{\Lrw}{\Longrightarrow}
\newcommand{\Lrlw}{\Longleftrightarrow}
\newcommand{\ff}{\mathfrak{f}}
\newcommand{\fp}{\mathfrak{p}}
\newcommand{\fx}{\mathfrak{x}}
\newcommand{\fy}{\mathfrak{y}}
\newcommand{\calA}{\mathcal{A}}
\newcommand{\calC}{\mathcal{C}}
\newcommand{\calO}{\mathcal{O}}
\newcommand{\calS}{\mathcal{S}}
\newcommand{\fF}{\mathfrak{F}}
\DeclareMathOperator{\spec}{spec}
\DeclareMathOperator{\upc}{\uparrow\!}
\DeclareMathOperator{\downc}{\downarrow\!}
\DeclareMathOperator{\Up}{P_{_{\! \uparrow}}\!}
\DeclareMathOperator{\Spl}{Spl}
\newcommand{\mate}[1]{\,^\ulcorner\! #1^\urcorner}
\newcommand{\catfont}[1]{\mathsf{#1}}
\newcommand{\SET}{\catfont{Set}}
\newcommand{\REL}{\catfont{Rel}}
\newcommand{\DIST}{\catfont{Dist}}
\newcommand{\TOP}{\catfont{Top}}
\newcommand{\ORDCH}{\catfont{OrdCompHaus}}
\newcommand{\PRIEST}{\catfont{Priest}}
\newcommand{\COMPHAUS}{\catfont{CompHaus}}
\newcommand{\COMPHAUSREL}{\catfont{CompHausRel}}
\newcommand{\STONE}{\catfont{Stone}}
\newcommand{\STONEREL}{\catfont{StoneRel}}
\newcommand{\STCOMP}{\catfont{StLocComp}}
\newcommand{\STCOMPDIST}{\catfont{StLocCompDist}}
\newcommand{\SPEC}{\catfont{Spec}}
\newcommand{\SPECDIST}{\catfont{SpecDist}}
\newcommand{\ESA}{\catfont{Esa}}
\newcommand{\ESADIST}{\catfont{EsaDist}}
\newcommand{\GESADIST}{\catfont{GEsaDist}}
\newcommand{\DLAT}{\catfont{DLat}}
\newcommand{\BOOL}{\catfont{Boole}}
\newcommand{\CONTLAT}{\catfont{ContLat}}
\newcommand{\STCONTDLAT}{\catfont{StContDLat}}
\newcommand{\COHEYT}{\catfont{coHeyt}}
\newcommand{\modto}{\mathrel{\mathmakebox[\widthof{$\xrightarrow{\rule{1.45ex}{0ex}}$}]
{\xrightarrow{\rule{1.45ex}{0ex}}\hspace*{-2.8ex}{\circ}\hspace*{1ex}}}} 
\newcommand{\relto}{\mathrel{\mathmakebox[\widthof{$\xrightarrow{\rule{1.45ex}{0ex}}$}]
{\xrightarrow{\rule{1.45ex}{0ex}}\hspace*{-2.4ex}{\mapstochar}\hspace*{1.8ex}}}} 
\newcommand{\monadfont}[1]{\mathbbm{#1}}
\newcommand{\mV}{\monadfont{V}}
\newcommand{\mtV}{\widehat{\monadfont{V}}}
\newcommand{\mF}{\monadfont{F}}
\newcommand{\mFp}{\monadfont{F}_{\! p}}
\newcommand{\fmonad}{(F,e,m)}
\newcommand{\fpmonad}{(F_p,e,m)}
\newcommand{\vmonad}{(V,e,m)}
\newcommand{\tvmonad}{(\widehat{V},e,m)}
\newcommand\adjunct[2]{\xymatrix@=8ex{\ar@{}[r]|{\top}\ar@<1mm>@/^2mm/[r]^{{#2}} & \ar@<1mm>@/^2mm/[l]^{{#1}}}}
\newcommand{\doo}[1]{\overset{\centerdot}{#1}}
\newcommand{\op}{\mathrm{op}}
\newcommand{\co}{\mathrm{co}}
\newcommand{\df}[1]{\emph{\textbf{#1}}}
\title{Some notes on Esakia spaces}
\author{Dirk Hofmann}
\author{Pedro Nora}
\thanks{Partial financial assistance by Portuguese funds through CIDMA (Center for Research and Development in Mathematics and Applications), and the Portuguese Foundation for Science and Technology (``FCT -- Funda\c{c}\~ao para a Ci\^encia e a Tecnologia''), within the project PEst-OE/MAT/UI4106/2014, and by the project NASONI under the contract FCOMP-01-0124-FEDER-028923 is gratefully acknowledged.}
\address{Center for Research and Development in Mathematics and Applications, Department of Mathematics, University of Aveiro, 3810-193 Aveiro, Portugal}
\email{dirk@ua.pt}
\email{a28224@ua.pt}
\dedicatory{Dedicated to Manuela Sobral}
\date{\today}
\subjclass[2010]{%
03G05, 
03G10, 
18A40, 
18C15, 
18C20, 
54H10  
}
\keywords{Boolean algebra, distributive lattice, Heyting algebra, dual equivalence, Stone space, spectral space, Esakia space, Vietoris functor, idempotent split completion, split algebra}
\begin{document}

\begin{abstract}
Under Stone/Priestley duality for distributive lattices, Esakia spaces correspond to Heyting algebras which leads to the well-known dual equivalence between the category of Esakia spaces and morphisms on one side and the category of Heyting algebras and Heyting morphisms on the other. Based on the technique of idempotent split completion, we give a simple proof of a more general result involving certain relations rather then functions as morphisms. We also extend the notion of Esakia space to all stably locally compact spaces and show that these spaces define the idempotent split completion of compact Hausdorff spaces. Finally, we exhibit connections with split algebras for related monads.
\end{abstract}

\maketitle

\section*{Introduction}

These notes evolve around the observation that Esakia duality for Heyting algebras arises more naturally when considering the larger category $\SPECDIST$ with objects spectral spaces and with morphisms spectral distributors. In fact, as we observed already in \citep{Hof14}, in this category Esakia spaces define the idempotent split completion of Stone spaces. Furthermore, it is well-known that $\SPECDIST$ is dually equivalent to the category $\DLAT_{\bot,\vee}$ of distributive lattices and maps preserving finite suprema and that, under this equivalence, Stone spaces correspond to Boolean algebras. This tells us that the category of Esakia spaces and spectral distributors is dually equivalent to the idempotent split completion of the category $\BOOL_{\bot,\vee}$ of Boolean algebras and maps preserving finite suprema. However, the main ingredients to identify this category as the full subcategory of $\DLAT_{\bot,\vee}$ defined by all co-Heyting algebras were already provided by \citeauthor{MT46} in \citeyear{MT46}.

In order to present this argumentation, we carefully recall in Section \ref{sect:StoneHalmos} various aspects of spectral spaces and Stone spaces which are the spaces occurring on the topological side of the famous duality theorems of Stone for distributive lattices and Boolean algebras. Special emphasis is given to the larger class of stably locally compact spaces and their relationship with ordered compact Hausdorff spaces. We also briefly present the extension of Stone's result to categories of continuous relations, an idea attributed to Halmos. These continuous relations and, more generally, spectral distributors, are best understood using the Vietoris monad which is the topic of Section \ref{sect:Vietoris}. In particular, we identify adjunctions in the Kleisli category of the lower Vietors monad on the category of stably locally compact spaces and spectral maps which is then used to describe Esakia spaces as the idempotent split completion of Stone spaces. In Section \ref{sect:Esakia} we use the facts presented in the previous section to deduce Esakia dualities using the technique of idempotent split completion. Moreover, we extend the notion of Esakia space to all stably locally compact spaces and deduce in Section \ref{sect:GenEsakia} that the category of (generalised) Esakia spaces and spectral distributors is the idempotent split completion of the category of compact Hausdorff spaces and continuous relations. Finally, the idempotent split completion of Kleisli categories is ultimately linked to the notion of split algebra for a monad, which is the topic of Section \ref{sect:SplitAlgebras}.

\section{Stone and Halmos dualities}\label{sect:StoneHalmos}

The aim of this section is to collect some well-known facts about duality theory for Boolean algebras and distributive lattices and about the topological spaces which occur as their duals. As much as possible we try to indicate original sources.

Naturally, we begin with the classical Stone dualities stating (in modern language) that the category $\STONE$ of \df{Stone spaces} (= zero-dimensional compact Hausdorff topological spaces) and continuous maps is dually equivalent to the category $\BOOL$ of Boolean algebras and homomorphisms (see \citep{Sto36})
\[
 \STONE^\op\simeq\BOOL;
\]
and that the category $\SPEC$ of spectral spaces and spectral maps is dually equivalent to the category $\DLAT$ of distributive lattices\footnote{We note that for us a lattice is an ordered set with finite suprema and finite infima, hence every lattice has a largest element $\top$ and a smallest element $\bot$.} and homomorphisms (see \citep{Sto38})
\[
 \SPEC^\op\simeq\DLAT.
\]
We recall that a topological space $X$ is \df{spectral} whenever $X$ is sober and the compact and open subsets are closed under finite intersections and form a base for the topology of $X$. Note that in particular every spectral space is compact. A continuous map $f\colon X\to Y$ between spectral spaces is called \df{spectral} whenever $f^{-1}(A)$ is compact, for every $A\subseteq Y$ compact and open. A subset of a Stone space is compact if and only if it is closed, hence every Stone space is spectral and every continuous map between Stone spaces is spectral; that is, $\STONE$ is a full subcategory of $\SPEC$. Moreover, a spectral space $X$ is a Stone space if and only if $X$ is Hausdorff. Under the equivalence above, a spectral space $X$ corresponds to the distributive lattice of compact and open subsets of $X$ ordered by inclusion; if $X$ is a Stone space, then the lattice of compact opens is actually the Boolean algebra of closed and open subsets. In the other direction, to a distributive lattice $L$ one associates its prime spectrum $\spec L$; and $\spec L$ is Hausdorff if and only if $L$ is a Boolean algebra. For a detailed presentation of these duality theorems and many of their consequences we refer to \citep{Joh86}.

Another important aspect of spectral spaces is disclosed in \citep{Hoc69}: besides being the prime spectra of distributive lattices, spectral spaces are also precisely the prime spectra of commutative rings with unit. For a common study of lattice spectra and ring spectra we refer to \citep{Sim80}. \citeauthor{Hoc69} also constructs a right adjoint $\SPEC\to\STONE$ to the inclusion functor $\STONE\hrw\SPEC$ which associates to a spectral space $X$ the topological space with the same underlying set and with the topology generated by the open subsets and the complements of the compact open subsets of $X$, this space is called the \df{patch} of $X$. Furthermore, in this paper we follow \citep{Hoc69} and consider the natural \df{underlying order} of a topological T0-space $X$ defined as
\[
 x\le y\hspace{1em}\text{whenever}\hspace{1em} y\in\overline{\{x\}},
\]
which is equivalent to saying that the principal filter $\doo{x}$ converges to $y$. This order relation is discrete if and only if $X$ is T1. 

For a general (not necessarily T0) topological space, this relation is still reflexive and transitive and leads to another important feature of the category $\TOP$ of topological spaces and continuous maps: $\TOP$ is a 2-category. Here, for continuous maps $f,g\colon X\to Y$ between topological spaces we write $f\le g$ whenever $f(x)\le g(x)$ for all $x\in X$, which defines the 2-cells in $\TOP$. Consequently, we consider also subcategories of $\TOP$ as 2-categories; and note that this structure becomes trivial in $\COMPHAUS$ and $\STONE$. The 2-categorical nature of $\TOP$ leads us to consider the notion of adjunction: for continuous maps $f\colon X\to Y$ and $g\colon Y\to X$, we say that $f$ is \df{left adjoint} to $g$, written as $f\dashv g$, if $1_X\le gf$ and $fg\le 1_Y$. Given $f$, there exists up to equivalence at most one such $g$, and in this case we call $f$ a left adjoint continuous map. 

\citeauthor{Hoc69} also introduces a notion of \df{dual space}: for a spectral space $X$, the set $X$ equipped with the topology generated by the complements of the compact open subsets of $X$ is a spectral space whose underlying order is dual to the underlying order of $X$. Therefore we denote this space by $X^\op$, and it is not hard to see that $(X^\op)^\op=X$. Since every spectral map $f\colon X\to Y$ is also a spectral map of type $X^\op\to Y^\op$, we obtain a 2-functor
\[
 (-)^\op\colon \SPEC\to\SPEC^\co.
\]

A different perspective on spectral spaces is offered by \cite{Pri70,Pri72}. \citeauthor{Pri70} showed that the category $\DLAT$ is also dually equivalent to the full subcategory $\PRIEST$ of the category $\ORDCH$ of ordered compact Hausdorff spaces (as introduced in \citep{Nac50}) and continuous monotone maps defined by all order-separated spaces (these spaces are nowadays called \df{Priestley spaces}). We note that the compact Hausdorff topology of a Priestley space is necessarily a Stone topology. Hence, in an indirect way she showed that the categories $\SPEC$ and $\PRIEST$ are equivalent,
\[
 \SPEC\simeq\PRIEST.
\]
A couple of years later, \citeauthor{Cor75} proved this fact directly in \citep{Cor75} (see also \citep{Fle00}); in fact, both categories are shown to be isomorphic. Here a spectral space $X$ corresponds to the Priestley space with the same underlying set, ordered by the underlying order of $X$, and equipped with the patch topology. In the other direction, a Priestley space $X$ corresponds to the spectral space whose topology is given by all those opens of $X$ which are also down-closed.

More generally, this construction does not only apply to Priestley spaces but indeed to all ordered compact Hausdorff spaces and defines an isomorphism between $\ORDCH$ and the category $\STCOMP$ of stably locally compact spaces and spectral maps between them. Below we sketch this correspondence, for more information we refer to \citep{GHK+80} and also to the more recent \citep{Jun04}. A topological space $X$ is called \df{stably locally compact} if $X$ is sober, locally compact and finite intersections of compact down-sets (with respect to the underlying order of $X$) are compact. A continuous map $f\colon X\to Y$ between stably locally compact spaces is \df{spectral} whenever $f^{-1}(A)$ is compact, for every $A\subseteq Y$ compact and down-closed. Equivalently, a topological space $X$ is stably locally compact if and only if $X$ is T0, locally compact and every ultrafilter in $X$ has a smallest convergence point with respect to the underlying order of $X$; and a map  $f\colon X\to Y$ between stably locally compact spaces is spectral (in particular continuous) if and only if $f$ is monotone with respect to the underlying orders and, moreover, the diagram
\[
 \xymatrix{UX\ar[r]^{Uf}\ar[d]_\alpha & UY\ar[d]^\beta\\ X\ar[r]_f & Y}
\]
commutes. Here $U\colon \SET\to\SET$ denotes the ultrafilter functor and the maps $\alpha\colon UX\to X$ and $\beta\colon UY\to Y$ pick, for each ultrafilter, the smallest convergence point. Every spectral space is stably locally compact, and the two notions of spectral maps between spectral spaces are actually equivalent. We also point out that a continuous map $f\colon X\to Y$ between stably locally compact spaces which is left adjoint in $\TOP$ is automatically spectral.

Every compact Hausdorff space is stably locally compact and every continuous map between compact Hausdorff spaces is spectral, which defines the inclusion functor 
\[
 \COMPHAUS\hrw\STCOMP.
\]
As for spectral spaces and Stone spaces, this functor has a right adjoint
\[
 \STCOMP\to\COMPHAUS
\]
which sends a stably locally compact space $X$ to the compact Hausdorff space with the same underlying set and the (generalised) patch topology, that is, the topology generated by the open subsets and the complements of the compact down-closed subsets of $X$. For $X$ spectral, this topology coincides with the patch topology described above. Using this generalised patch topology, the correspondence between spectral spaces and Priestley spaces extends immediately: every stably locally compact space $X$ defines an ordered compact Hausdorff space with the patch topology and the underlying order of $X$, and an ordered compact Hausdorff space $X$ becomes a stably locally compact space where the topology is given by all down-closed opens of $X$. Clearly, $\ORDCH$ is also a 2-category with the point-wise order on maps; and then the above described isomorphism
\[
 \ORDCH\simeq\STCOMP
\]
is an isomorphism of 2-categories. In terms of ordered compact Hausdorff spaces, the adjunction
\[
 \STCOMP\adjunct{}{}\COMPHAUS
\]
becomes
\[
 \ORDCH\adjunct{\text{discrete}}{\text{forgetful}}\COMPHAUS.
\]
Moreover, there is a 2-functor
\[
 (-)^\op\colon\ORDCH\to\ORDCH^\co
\]
which inverts the order relation of a compact Hausdorff space $X$; and which induces a 2-functor
\[
 (-)^\op\colon\STCOMP\to\STCOMP^\co
\]
where $X^\op$ turns out to be the space with the same underlying set as $X$ and with the topology induced by the complements of the compact down-sets of $X$. If $X$ is spectral, this notion of dual space coincides with the one of \citeauthor{Hoc69} described above.

Another interesting generalisation of Stone's duality theorem is given in \citep{Hal56}. Instead of continuous maps, \citeauthor{Hal56} considers continuous relations and shows that the category $\STONEREL$ of Stone spaces and continuous relations (called Boolean relations in \citep{Hal56}) is dually equivalent to the category $\BOOL_{\bot,\vee}$ of Boolean algebras with ``hemimorphisms'', that is, maps preserving finite suprema but not necessarily finite infima (see also \citep{SV88}). Similarly, the category $\SPECDIST$ of spectral spaces and spectral distributors (respectively Priestley spaces and Priestley distributors, see \citep{CLP91}) is dually equivalent to the category $\DLAT_{\bot,\vee}$ of distributive lattices and maps preserving finite suprema. 

We have not yet explained the meaning of continuous relation and spectral distributor, which is the subject of the next section.

\section{Vietoris monads}\label{sect:Vietoris}

Similarly to the fact that the category $\REL$ of sets and relations can be seen as the Kleisli category of the power-set monad on $\SET$, we will describe $\STONEREL$ and $\SPECDIST$ as Kleisli categories of certain monads. 

Before doing so, we recall the notion of ``monotone relation'' between ordered sets. A relation $r\colon X\relto Y$ between ordered sets is called a \df{distributor} whenever, for all $x,x'\in X$ and $y,y'\in Y$,
\begin{align*}
 (x\,r\,y\;\&\;y\le y')\;\Rw\;x\,r\,y' &&\text{and}&&
 (x\le x'\;\&\;x'\,r\, y')\;\Rw\;x\,r\,y'.
\end{align*}
Put differently, the corresponding map $\mate{r}\colon X\to PY$ from $X$ into the powerset of $Y$ has its image in the ordered set $\Up Y$ of all up-closed subsets of $Y$ (ordered by inverse inclusion), and the restriction $\mate{r}\colon X\to \Up Y$ is monotone. We write $r\colon X\modto Y$ to indicate that $r$ is a distributor. The relational composite of distributors is a distributor again, and the identity with respect to this composition law is the order relation on an ordered set $X$. We have thus described the category $\DIST$ of ordered sets and distributors which becomes a 2-category when considering the inclusion order of relations. We also note that $\DIST$ is isomorphic to the Kleisli category of the up-set monad on the category of ordered sets and monotone maps. A monotone map $f\colon X\to Y$ between ordered sets induces distributors $f_*\colon X\modto Y$ and $f^*\colon Y\modto X$ defined by
\begin{align*}
 x\,f_*\,y \text{ whenever } f(x)\le y &&\text{and}&& y\,f^*\,x \text{ whenever } y\le f(x)
\end{align*}
respectively; that is, $f_*=\le_Y\cdot f$ and $f^*=f^\circ\cdot\le_Y$. We also remark that $f_*\dashv f^*$ in the ordered category $\DIST$, in fact, every adjunction in $\DIST$ is of this form (see \citep{BD86}).

Arguably, the topological counterpart to the up-set monad is the \df{lower Vietoris monad} $\mV=\vmonad$ on $\TOP$ which consists of the functor $V\colon \TOP\to\TOP$ sending a topological space $X$ to the space
\[
 VX=\{A\subseteq X\mid A\text{ is closed}\}
\]
with the topology generated by the sets
\[
 U^\Diamond=\{A\in VX\mid A\cap U\neq\varnothing\}\hspace{2em}(\text{$U\subseteq X$ open}),
\]
and $Vf\colon VX\to VY$ sends $A$ to $\overline{f[A]}$, for $f\colon X\to Y$ in $\TOP$; and the unit $e$ and the multiplication $m$ of $\mV$ are given by
\begin{align*}
 e_X\colon X\to VX,\,x\mapsto\overline{\{x\}} &&\text{and}&& m_X\colon VVX\to VX,\,\calA\mapsto\bigcup\calA
\end{align*}
respectively. We note that the underlying order of $VX$ is the opposite of subset inclusion, that is, $A\le B$ if and only if $A\supseteq B$, for all $A,B\in VX$.
We also note that $V\colon \TOP\to\TOP$ is a 2-functor. The following lemma describes the convergence in $VX$ (see \citep{Hof14}).

\begin{lemma}\label{lem:VietorisConv}
Let $X$ be a topological space, $A\in VX$ and $\fp$ be an ultrafilter on $VX$. Then $\fp\to A$ in $VX$ if and only if $A\subseteq \bigcap_{\calA\in\fp}\overline{\bigcup\calA}$.
\end{lemma}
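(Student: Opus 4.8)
The claim describes ultrafilter convergence in $VX$: for an ultrafilter $\fp$ on $VX$ and a closed set $A \in VX$, we have $\fp \to A$ if and only if $A \subseteq \bigcap_{\calA \in \fp} \overline{\bigcup \calA}$. I would organize the proof around the fact that the topology on $VX$ is generated by the subbasic sets $U^\Diamond$ for $U \subseteq X$ open, so convergence $\fp \to A$ means precisely that $U^\Diamond \in \fp$ for every open $U$ with $A \in U^\Diamond$, i.e., for every open $U$ meeting $A$.

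**Forward direction.**

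First I would assume $\fp \to A$ and fix $\calA \in \fp$; I want to show $A \subseteq \overline{\bigcup \calA}$. Equivalently, I would show that every open $U$ meeting $A$ also meets $\bigcup \calA$. Since $U$ meets $A$, we have $A \in U^\Diamond$, so $U^\Diamond$ is a neighbourhood of $A$, hence $U^\Diamond \in \fp$ by convergence. As both $U^\Diamond$ and $\calA$ lie in the ultrafilter $\fp$, their intersection is nonempty: pick $B \in \calA \cap U^\Diamond$. Then $B \cap U \neq \varnothing$ and $B \subseteq \bigcup \calA$, so $U$ meets $\bigcup \calA$ as desired. Intersecting over all $\calA \in \fp$ gives the inclusion.

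**Reverse direction.**

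Conversely, I would assume $A \subseteq \bigcap_{\calA \in \fp} \overline{\bigcup \calA}$ and verify $U^\Diamond \in \fp$ for every open $U$ with $A \cap U \neq \varnothing$; since the $U^\Diamond$ form a subbase, this suffices for $\fp \to A$. Fix such a $U$ and pick $x \in A \cap U$. Because $x \in A \subseteq \overline{\bigcup \calA}$ for every $\calA \in \fp$, the open neighbourhood $U$ of $x$ meets $\bigcup \calA$, so there is some $B \in \calA$ with $B \cap U \neq \varnothing$, i.e., $B \in U^\Diamond$. Thus $\calA \cap U^\Diamond \neq \varnothing$ for every $\calA \in \fp$, which means $U^\Diamond$ meets every element of $\fp$; since $\fp$ is an ultrafilter, this forces $U^\Diamond \in \fp$.

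**Main obstacle.**

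The proof is essentially a bookkeeping exercise once the right reformulation is in place; the only real subtlety — and the step I would take most care over — is the reverse direction, where the move from "$U^\Diamond$ meets every member of $\fp$" to "$U^\Diamond \in \fp$" genuinely uses the ultrafilter (not merely filter) property, and one must be careful that it is the sets $U^\Diamond$ themselves, rather than arbitrary basic opens (finite intersections of such), that need to be shown to lie in $\fp$ — this is legitimate precisely because convergence can be tested on a subbase.
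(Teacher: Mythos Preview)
Your argument is correct. The paper does not actually supply a proof of this lemma---it merely cites \cite{Hof14}---so there is no in-paper argument to compare against; the approach you give (reduce convergence to the subbasic opens $U^\Diamond$ and unwind the closure condition) is the standard one. Your observation that the ultrafilter hypothesis is genuinely needed in the reverse implication, to pass from ``$U^\Diamond$ meets every member of $\fp$'' to ``$U^\Diamond\in\fp$'', is accurate; the forward implication, by contrast, goes through for any filter.
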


A continuous map $f\colon X\to Y$ between topological spaces is called \df{down-wards open} whenever, for every open subset $A\subseteq X$, the down-closure $\downc f[A]$ of $f[A]$ is open in $Y$. Below we record some important properties of $\mV$, for more information we refer to \cite{Sch93}, \citep{Esc98} and \cite{Hof14}.

\begin{proposition}
\phantomsection\label{prop:PropLowVietoris}
\begin{enumerate}
\item The monad $\mV=\vmonad$ on $\TOP$ is of Kock-Z\"oberlein type, that is, $e_{VX}\le Ve_X$ for every topological spaces $X$ (see \cite{Koc95} and \cite{Zob76}).
\item Let $f\colon X\to Y$ be in $\TOP$. Then $Vf$ has a left adjoint if and only if $f$ is down-wards open.
\item For every topological space $X$, if $X$ is stably locally compact, then so is $VX$.
\item If $X$ is stably locally compact, then $e_X\colon X\to VX$ and $m_X\colon VVX\to VX$ are spectral.
\item If $f\colon X\to Y$ is a continuous map between stably locally compact spaces, then $Vf\colon VX\to VY$ is spectral if and only if $f\colon X\to Y$ is spectral.
\item A stably compact space $X$ is spectral if and only if $VX$ is spectral.
\end{enumerate}
\end{proposition}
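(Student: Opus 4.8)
The plan is to dispatch the six items in turn, using throughout the explicit description of the Vietoris topology via the subbasic opens $U^\Diamond$ and the fact recalled just before the statement that the underlying order of $VX$ is the opposite of inclusion; equivalently, a subset of $VX$ is closed exactly when it is closed under passing to subsets, and $\overline{\{A\}}=\{B\in VX\mid B\subseteq A\}$ for every closed $A$. For (1) one merely unwinds the inequality: in the reverse-inclusion order of $VVX$, $e_{VX}(A)\le Ve_X(A)$ reads $\overline{e_X[A]}\subseteq\overline{\{A\}}=\{B\in VX\mid B\subseteq A\}$, and since $A$ is closed we have $\overline{\{a\}}\subseteq A$ for every $a\in A$, so $e_X[A]\subseteq\overline{\{A\}}$ and the claim follows on taking closures. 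For (2), the candidate left adjoint of $Vf$ is $g\colon VY\to VX$, $B\mapsto f^{-1}[B]$, which lands in $VX$ because $f$ is continuous; the inequalities $g(Vf(A))=f^{-1}[\overline{f[A]}]\supseteq A$ and $Vf(g(B))=\overline{f[f^{-1}[B]]}\subseteq B$ say precisely $g\cdot Vf\le 1_{VX}$ and $1_{VY}\le Vf\cdot g$, so $g\dashv Vf$ as soon as $g$ is continuous. The point is the identity $g^{-1}(U^\Diamond)=\{B\in VY\mid B\cap\downc f[U]\neq\varnothing\}$, whose nontrivial inclusion uses that a closed $B$ containing a point below some $f(x)$ already contains $f(x)$: if $f$ is downwards open this set is $(\downc f[U])^\Diamond$ and $g$ is continuous, while conversely applying $e_Y^{-1}$ to this set recovers $\downc f[U]$, so continuity of $g$ forces $f$ to be downwards open. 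Uniqueness of adjoints in the $2$-category $\TOP$ then yields both implications.

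For (3) I would invoke the ultrafilter characterisation of stably locally compact spaces from Section~\ref{sect:StoneHalmos}. By Lemma~\ref{lem:VietorisConv} the limit points of an ultrafilter $\fp$ on $VX$ are exactly the closed sets contained in $A_\fp:=\bigcap_{\calA\in\fp}\overline{\bigcup\calA}$; as an intersection of closed sets this is closed, hence is itself the $\le$-least limit point, and in particular every ultrafilter on $VX$ converges (the empty set is a limit of everything), so $VX$ is quasicompact. That $VX$ is $T0$ is immediate, since $(X\setminus B)^\Diamond$ separates any $A\not\subseteq B$ from $B$, so only local compactness of $VX$ remains. Here one uses local compactness of $X$: given $A$ in a basic open $\bigcap_iU_i^\Diamond$, choose $a_i\in A\cap U_i$ and a compact neighbourhood $K_i$ of $a_i$ inside $U_i$; then $\bigcap_iV_i^\Diamond\subseteq\{B\in VX\mid B\cap K_i\neq\varnothing\text{ for all }i\}\subseteq\bigcap_iU_i^\Diamond$ for suitable opens $V_i\subseteq K_i$, and the middle set is compact because any ultrafilter containing it converges to a limit meeting every $K_i$, by a finite-intersection-property argument with the $K_i$ and the formula for $A_\fp$. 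This settles (3) (compare \citep{Sch93,Esc98,Hof14}).

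For (4), $e_X$ and $m_X$ are continuous (being monad data) and monotone (the orders on $VX$ and $VVX$ being reverse inclusion), and being spectral is precisely commutativity of the square with the ultrafilter functor, i.e. preservation of $\le$-least limit points; this is a direct computation with $A_\fp$, using $\overline{\upc B}=\overline{B}$ for $e_X$, and for $m_X$ the identities $\bigcup\overline{\mathcal{F}}=\overline{\bigcup\mathcal{F}}$ (closures taken in $VX$ and in $X$ respectively) and $\bigcup\bigcap_i\mathcal{G}_i=\bigcap_i\bigcup\mathcal{G}_i$ for families $\mathcal{G}_i$ of closed subsets of $VX$, both of which follow from the description of $U^\Diamond$ together with the fact that closed subsets of $VX$ are closed under subsets. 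For (5), the direction $Vf$ spectral $\Rightarrow f$ spectral is easy: one cancels the injective order-embedding $e_Y$ in the naturality identity $e_Y\cdot f=Vf\cdot e_X$, using that $e_X$, $e_Y$, $Vf$ are spectral (by (3)--(4)) and that spectral maps compose. The converse, $f$ spectral $\Rightarrow Vf$ spectral, is the delicate one, most transparently handled by passing to ordered compact Hausdorff spaces, under which $V$ becomes the ordinary Vietoris hyperspace functor — continuous and order-preserving — or else by an ultrafilter computation that genuinely uses the compactness packed into stable local compactness.

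Finally, for (6): if $X$ is stably compact then so is $VX$, by (3) and the quasicompactness noted above, and a stably compact space is spectral exactly when its compact opens form a base (closure of the compact opens under finite intersections being then automatic). If $X$ is spectral, the sets $U^\Diamond$ with $U$ compact open, together with their finite intersections, form such a base and are compact (a refinement argument parallel to the one used for local compactness of $VX$), so $VX$ is spectral. Conversely, if $VX$ is spectral, then $e_X\colon X\to VX$ is a spectral topological embedding with $e_X^{-1}(U^\Diamond)=U$, so the $e_X$-preimages of the compact opens of $VX$ are compact opens of $X$ that form a base, whence $X$ is spectral. I expect the genuinely topological work — and the main obstacle — to lie in the local compactness of $VX$ in (3) and in the implication $f$ spectral $\Rightarrow Vf$ spectral of (5); the remaining items are essentially bookkeeping with the reverse-inclusion order and the convergence formula of Lemma~\ref{lem:VietorisConv}.
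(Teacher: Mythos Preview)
The paper does not prove this proposition at all: it is stated with the remark ``for more information we refer to \citep{Sch93}, \citep{Esc98} and \citep{Hof14}'' and then used as a black box. So there is no proof in the paper to compare your attempt against; what you have written is already far more than the paper offers.

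That said, a few comments on your sketch. Items (1), (2), (6) and the ``$Vf$ spectral $\Rightarrow f$ spectral'' half of (5) are clean and correct; in (2) the uniqueness-of-adjoints step is fine because $VX$ is always $T0$, so the underlying order is antisymmetric and the order-theoretic left adjoint is unique on the nose. In (3) your local-compactness argument is on the right track but the compactness of the ``middle set'' $\{B\mid B\cap K_i\neq\varnothing\text{ for all }i\}$ needs a little more care: to get $A_{\fp}\cap K_i\neq\varnothing$ you are intersecting closed sets $\overline{\bigcup\calA}$ with a compact (saturated) $K_i$, and in a non-Hausdorff space this does not immediately give the finite intersection property in $K_i$; one has to use that $X$ is stably locally compact so that the $K_i$ can be taken as compact down-sets and the well-filteredness of sober locally compact spaces kicks in. In (4) the displayed identity $\bigcup\bigcap_i\mathcal{G}_i=\bigcap_i\bigcup\mathcal{G}_i$ for arbitrary families of closed subsets of $VX$ is false as stated (one inclusion fails in general); what you actually need is the specific computation of the smallest limit of $Um_X(\fp)$, which is easier done directly from Lemma~\ref{lem:VietorisConv} than via such a general identity. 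For the hard direction of (5), passing to $\ORDCH$ is indeed the clean route and is how the cited references handle it. None of this is a genuine obstruction, only places where your sketch would need tightening before it is a proof.
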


Consequently, the monad $\mV=\vmonad$ on $\TOP$ restricts to Kock-Z\"oberlein monads on $\STCOMP$ and on $\SPEC$, also denoted by $\mV=\vmonad$. Using the adjunction
\begin{align*}
 \STCOMP\adjunct{}{}\COMPHAUS,
\end{align*}
we can transfer the monad $\mV$ on $\STCOMP$ to the \df{Vietoris monad} $\mtV=\tvmonad$ on $\COMPHAUS$. Hence, $\widehat{V}X$ is the patch space of $VX$; the topology of $\widehat{V}X$ is generated by the sets
\begin{align*}
 U^\Diamond\hspace{1em}\text{($U\subseteq X$ open)} &&\text{and}&& \{A\subseteq X\text{ closed}\mid A\cap K=\varnothing\}\hspace{1em}\text{($K\subseteq X$ compact)}.
\end{align*}
We note that this is the topology on the set of closed subsets of a compact Hausdorff space originally considered by \citep{Vie22}. The unit $e$ and the multiplication $m$ are as above, but note that $e_X(x)=\{x\}$ since $X$ is Hausdorff. 

\begin{proposition}
A compact Hausdorff space $X$ is a Stone space if and only if $\widehat{V}X$ is a Stone space.
\end{proposition}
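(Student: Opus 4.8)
The plan is to exploit that both $X$ and $\widehat{V}X$ are compact Hausdorff spaces, for which being a Stone space is equivalent to having a separating family of clopen subsets. First I would record that $\widehat{V}X$ really is compact Hausdorff: $X$ compact Hausdorff is in particular stably locally compact, so $VX$ is stably locally compact by Proposition~\ref{prop:PropLowVietoris}(3), and $\widehat{V}X$ is by definition the patch space of $VX$, hence lies in $\COMPHAUS$. Thus everything reduces to comparing the zero-dimensionality of $X$ and of $\widehat{V}X$.

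For the forward implication I would use that a Stone space is a spectral space ($\STONE$ being a full subcategory of $\SPEC$), so that $VX$ is spectral by Proposition~\ref{prop:PropLowVietoris}(6); since the patch space of a spectral space is a Stone space (the target of Hochster's right adjoint $\SPEC\to\STONE$ from \citep{Hoc69} recalled in Section~\ref{sect:StoneHalmos}), $\widehat{V}X=\mathrm{patch}(VX)$ is a Stone space. One can also argue by hand: for a clopen $K\subseteq X$ the set $K^\Diamond$ is open in $\widehat{V}X$ because $K$ is open, and its complement $\{A\in VX\mid A\cap K=\varnothing\}$ is open in $\widehat{V}X$ because $K$, being closed in the compact space $X$, is compact; so $K^\Diamond$ is clopen, and given closed sets $A\neq B$ with, say, $a\in A\setminus B$, a clopen $K$ of $X$ with $a\in K\subseteq X\setminus B$ separates them in $\widehat{V}X$ via $A\in K^\Diamond\not\ni B$.

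For the converse I would exhibit $X$ as a closed subspace of $\widehat{V}X$ through $e_X\colon X\to\widehat{V}X$, $x\mapsto\{x\}$. It is an embedding: the trace on its image of a generating open $U^\Diamond$ is $\{\{x\}\mid x\in U\}$ and the trace of $\{A\in VX\mid A\cap K=\varnothing\}$ is $\{\{x\}\mid x\notin K\}$, and since the compact subsets of the compact Hausdorff space $X$ are precisely the closed ones, these traces range over all open subsets of $X$, so the subspace topology on the singletons is the topology of $X$. The image is closed: its complement in $\widehat{V}X$ consists of $\varnothing$, which has the open neighbourhood $\{A\in VX\mid A\cap X=\varnothing\}=\{\varnothing\}$, together with the closed sets having at least two points, each of which lies in an open set $U^\Diamond\cap V^\Diamond$ (for disjoint opens $U,V$ around two of its points) none of whose members is a singleton. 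As a closed subspace of the Stone space $\widehat{V}X$, the space $X$ is then a Stone space.

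The argument is largely routine; the one point that needs care is bookkeeping about the topology, since the separating sets $K^\Diamond$ (for $K$ clopen in $X$) and $\{\varnothing\}$ are clopen in the patch space $\widehat{V}X$ but not in $VX$, so one must work with $\widehat{V}X$ and use repeatedly that ``compact'' and ``closed'' coincide in the compact Hausdorff space $X$ — both to produce the relevant generating opens of $\widehat{V}X$ and to identify the subspace topology on the singletons with that of $X$.
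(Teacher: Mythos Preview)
The paper states this proposition without proof, so there is no explicit argument to compare against; presumably the authors regard the forward direction as immediate from Proposition~\ref{prop:PropLowVietoris}(6) together with the fact (recalled in Section~\ref{sect:StoneHalmos}) that the patch of a spectral space is a Stone space, and the converse as routine.

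Your proof is correct and your forward direction is exactly the argument the surrounding material suggests. For the converse you can shorten considerably: since $e_X\colon X\to\widehat{V}X$ is the unit of the monad $\mtV$ on $\COMPHAUS$, it is continuous; it is visibly injective; and any continuous injection from a compact space into a Hausdorff space is automatically a closed embedding. This single sentence replaces both your computation of the subspace topology on the singletons and your case analysis showing the image is closed. Your detailed verifications are not wrong, but they re-prove a general fact rather than use it.
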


Therefore the monad $\mtV$ on $\COMPHAUS$ restricts to a monad on $\STONE$ which we also denote by $\mtV=\tvmonad$.

For a compact Hausdorff space $X$, a map $r\colon X\to\widehat{V}X$ is continuous if and only if $r\colon X\to VX$ is spectral, hence $\COMPHAUS_{\mtV}$ can be considered as a full subcategory of $\STCOMP_\mV$ and consequently $\STONE_{\mtV}$ as a full subcategory of $\SPEC_\mV$. A relation $r\colon X\relto Y$ between spectral spaces is called \df{spectral distributor}, indicated as $r\colon X\modto Y$, whenever $r$ corresponds to a morphism in $\STCOMP_\mV$, that is, the map $\mate{r}\colon X\to PY$ factors as $X\to VY\hrw PY$ and, moreover, $X\to VY$ is spectral. Then $r\colon X\modto Y$ is also a distributor between the underlying ordered sets of $X$ and $Y$, which justifies our nomenclature. Furthermore, relational composition of spectral distributors corresponds to composition in $\STCOMP_\mV$ since the lower Vietoris functor on $\STCOMP$ ``behaves like the up-set functor'', that is, for a spectral map $f\colon X\to Y$ and $A\subseteq X$, one has $\overline{f[A]}=\upc f[A]$. Therefore $\STCOMP_\mV$ is isomorphic to the category $\STCOMPDIST$ of stably locally compact spaces and spectral distributors, with relational composition and the identity on $X$ given by the underlying order relation of $X$. The category $\STCOMPDIST$ becomes a 2-category via the inclusion order of relations which is dual to the order in $\STCOMP_\mV$, that is,
\[
 \STCOMP_\mV\simeq \STCOMPDIST^\co;
\]
and we have a forgetful 2-functor
\[
 \STCOMPDIST\to\DIST.
\]
For a compact Hausdorff space $X$, the underlying order is discrete and therefore we write $\COMPHAUSREL$ to denote the full subcategory of $\STCOMPDIST$ defined by compact Hausdorff spaces. We refer to the morphisms in $\COMPHAUSREL$ as \df{continuous relations}, and write $r\colon X\relto Y$ in this case. Clearly, there is a canonical forgetful functor $\COMPHAUSREL\to\REL$. Finally, we denote by $\SPECDIST$ the full subcategory of $\STCOMPDIST$ defined by all spectral spaces; likewise, $\STONEREL$ denotes the full subcategory of $\COMPHAUSREL$ defined by all Stone spaces.

Below we give a characterisation of spectral distributors in terms of ultrafilter convergence (see \citep{Hof14}). Before doing so, we recall from \citep{Bar70} that the ultrafilter functor $U\colon\SET\to\SET$ extends to a functor $U\colon \REL\to\REL$; here, for a relation $r\colon X\relto Y$, the relation $Ur\colon UX\relto UY$ is given by
\[
 \fx\,Ur\,\fy \iff \forall A\in\fx\,.\,\{y\in Y\mid x\,r\,y\text{ for some $x\in A$}\}\in\fy
\]
for all $\fx\in UX$ and $\fy\in UY$ (see also \citep{CH04}).

\begin{proposition}
Let $X$ and $Y$ be stably locally compact spaces with ultrafilter convergence $a\colon UX\relto X$ and $b\colon UY\relto Y$ respectively. Then a relation $r\colon X\relto Y$ is a spectral distributor $r\colon X\modto Y$ if and only if $r$ is a distributor between the underlying ordered sets and the diagram of relations
\[
 \xymatrix{UX\ar|-{\object@{|}}[d]_a\ar|-{\object@{|}}[r]^{Ur} & UY\ar|-{\object@{|}}[d]^b\\
 X\ar|-{\object@{|}}[r]_r & Y}
\]
commutes.
\end{proposition}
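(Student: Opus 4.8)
The plan is to reduce the claim to the characterisation of spectral maps between stably locally compact spaces, applied to $\mate{r}\colon X\to VY$. By definition, $r$ is a spectral distributor exactly when $\mate{r}(x)=\{y\in Y\mid x\,r\,y\}$ is closed for every $x\in X$ and the resulting map $\mate{r}\colon X\to VY$ is spectral; and since $VY$ is stably locally compact (Proposition~\ref{prop:PropLowVietoris}), the latter is equivalent to $\mate{r}$ being monotone together with commutativity of the ultrafilter square built from the maps $\alpha_X\colon UX\to X$ and $\alpha_{VY}\colon UVY\to VY$ picking smallest convergence points. First I would compute these ingredients. Writing $r[A]=\{y\in Y\mid x\,r\,y\text{ for some }x\in A\}$ for $A\subseteq X$, note that $\bigcup\mate{r}[A]=r[A]$ and that $\{\mate{r}[A]\mid A\in\fx\}$ is a filter base for the ultrafilter $U\mate{r}(\fx)$; combined with the description of convergence in $VY$ from Lemma~\ref{lem:VietorisConv}, which gives $\alpha_{VY}(\fp)=\bigcap_{\calA\in\fp}\overline{\bigcup\calA}$, this yields $\alpha_{VY}(U\mate{r}(\fx))=\bigcap_{A\in\fx}\overline{r[A]}$, whereas of course $\mate{r}(\alpha_X(\fx))=\{y\mid\alpha_X(\fx)\,r\,y\}$. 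Moreover, since closed subsets are up-closed for the underlying order, one has $\mate{r}(x)\in\Up Y$ automatically, so $\mate{r}$ is monotone precisely when $r$ is a distributor.

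Next I would unfold the two legs of the square in the statement. Because $\fx\,a\,x$ holds iff $\alpha_X(\fx)\le x$ and $r$ absorbs $\le_X$ on the left, $\fx\,(r\cdot a)\,y$ is equivalent to $\alpha_X(\fx)\,r\,y$. On the other side, $\fx\,(b\cdot Ur)\,y$ holds iff there is an ultrafilter $\fy$ on $Y$ with $r[A]\in\fy$ for all $A\in\fx$ and $\fy\to y$. The key auxiliary fact is that such a $\fy$ exists if and only if $y\in\overline{r[A]}$ for every $A\in\fx$: the substantial direction extends the family $\{r[A]\mid A\in\fx\}\cup\{V\mid V\text{ open},\ y\in V\}$ to an ultrafilter, which is possible because it has the finite intersection property, using $r[A_1]\cap\dots\cap r[A_n]\supseteq r[A_1\cap\dots\cap A_n]$ and $A_1\cap\dots\cap A_n\in\fx$. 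Hence $\fx\,(b\cdot Ur)\,y$ iff $y\in\bigcap_{A\in\fx}\overline{r[A]}$.

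Combining the two computations, both ``the ultrafilter square for $\mate{r}$ commutes'' and ``$b\cdot Ur=r\cdot a$'' assert exactly that $\{y\mid\alpha_X(\fx)\,r\,y\}=\bigcap_{A\in\fx}\overline{r[A]}$ for all $\fx\in UX$. This settles the implication from spectral distributor to the two stated conditions. For the converse I would start from $r$ a distributor with the square commuting and first recover that $\mate{r}$ lands in $VY$: instantiating the displayed identity at the principal ultrafilter $\doo{x}$, for which $\alpha_X(\doo{x})=x$, gives $\mate{r}(x)=\bigcap_{A\in\doo{x}}\overline{r[A]}=\overline{r[\{x\}]}=\overline{\mate{r}(x)}$, so $\mate{r}(x)$ is closed. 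Then $\mate{r}\colon X\to VY$ is a well-defined monotone map satisfying the ultrafilter square, hence spectral, so $r$ is a spectral distributor.

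I expect the main obstacle to be the filter-extension step identifying $b\cdot Ur$ --- getting the finite intersection property right so that the constructed ultrafilter simultaneously refines $\fx$ along $r$ and converges to $y$ --- and, secondarily, noticing that the clause ``$\mate{r}$ factors through $VY$'' must be re-derived in the converse direction, which the principal-ultrafilter instantiation handles.
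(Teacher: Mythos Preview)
The paper does not give its own proof of this proposition; it is merely stated with a reference to \citep{Hof14}. Your argument is correct and is exactly the natural one: you reduce the claim to the characterisation of spectral maps between stably locally compact spaces (monotone plus commutativity of the square of ``smallest convergence point'' maps), applied to $\mate{r}\colon X\to VY$, and you identify the two legs $r\cdot a$ and $b\cdot Ur$ with the two sides of that square via Lemma~\ref{lem:VietorisConv} and a filter-extension argument. The potential pitfalls you flag are the genuine ones, and you handle both: the finite-intersection-property step establishing $\fx\,(b\cdot Ur)\,y\iff y\in\bigcap_{A\in\fx}\overline{r[A]}$ is correct (using $r[A_1\cap\dots\cap A_n]\subseteq r[A_1]\cap\dots\cap r[A_n]$ and that any open neighbourhood of $y$ meets $r[\bigcap A_i]$), and the principal-ultrafilter instantiation cleanly recovers closedness of $\mate{r}(x)$ in the converse direction before you can invoke the spectral-map criterion. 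Your approach is also in the spirit of the proof the paper \emph{does} supply for the neighbouring proposition on continuous distributors between arbitrary topological spaces, which likewise unfolds the two composite relations using Lemma~\ref{lem:VietorisConv}; the difference is only that in the stably locally compact case the lax inclusions collapse to an equality because the convergence relations $a,b$ are of the form $\alpha_*,\beta_*$.
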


For a spectral map $f\colon X\to Y$ between stably locally compact spaces, the spectral distributor corresponding to the composite $X\xrightarrow{f}Y\xrightarrow{e_Y}VY$ is given by $f_*\colon X\modto Y$, defined with respect to the underlying orders. Also note that the definition of $f_*$ can be applied to any map $f\colon X\to Y$, not only to monotone and spectral maps. However, we have: 

\begin{proposition}
Let $X$ and $Y$ be stably locally compact spaces and $f\colon X\to Y$ be a map. Then $f$ is spectral if and only if $f_*$ is a spectral distributor.
\end{proposition}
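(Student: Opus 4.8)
The plan is to reduce the statement to a property of the unit $e_Y\colon Y\to VY$. For every $x\in X$ we have, by the definition of the underlying order, $\mate{f_*}(x)=\{y\in Y\mid f(x)\le y\}=\overline{\{f(x)\}}=e_Y(f(x))$; in particular $\mate{f_*}(x)$ is always closed, so $\mate{f_*}\colon X\to PY$ factors through $VY\hookrightarrow PY$ with corestriction $e_Y\circ f\colon X\to VY$. Hence, by the definition of spectral distributor (that is, of a morphism of $\STCOMP_\mV$), the relation $f_*$ is a spectral distributor if and only if the map $e_Y\circ f\colon X\to VY$ is spectral, and the proposition becomes the assertion that $f$ is spectral if and only if $e_Y\circ f$ is spectral.

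One implication is immediate: $e_Y$ is spectral by Proposition~\ref{prop:PropLowVietoris}(4), so if $f$ is spectral then so is the composite $e_Y\circ f$ — this is exactly the observation recorded before the statement. For the converse I would use the ultrafilter-convergence characterisation of spectral maps recalled above, together with two elementary facts about $e_Y$. First, $VY$ is stably locally compact by Proposition~\ref{prop:PropLowVietoris}(3), so the ``smallest convergence point'' map of $VY$ is available; write $\alpha$, $\beta$, $\gamma$ for these maps on $X$, $Y$, $VY$ respectively. Second, $e_Y$ is injective and in fact an order-embedding, since $e_Y(y)\le e_Y(y')$ in $VY$ means $\overline{\{y\}}\supseteq\overline{\{y'\}}$, i.e.\ $y\le y'$ in $Y$, and $Y$ is $T_0$ because it is sober.

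Assuming $e_Y\circ f$ is spectral, the argument then runs as follows. Since $e_Y\circ f$ is monotone and $e_Y$ reflects the order, $f$ is monotone. Moreover the convergence square for $e_Y\circ f$ gives $(e_Y\circ f)\circ\alpha=\gamma\circ U(e_Y\circ f)=\gamma\circ Ue_Y\circ Uf$, while the convergence square for the spectral map $e_Y$ reads $\gamma\circ Ue_Y=e_Y\circ\beta$; combining these yields $e_Y\circ(f\circ\alpha)=e_Y\circ(\beta\circ Uf)$, and cancelling the injective map $e_Y$ gives $f\circ\alpha=\beta\circ Uf$. Thus $f$ is monotone and satisfies the convergence square, hence is spectral.

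The proof is short and the only genuine choices are (i) to pass through $e_Y\circ f$ rather than trying to unpack the relational identity $b\cdot Uf_*=f_*\cdot a$ coming from the ultrafilter characterisation of spectral distributors directly, which would force an analysis of the ultrafilter $Uf_*$ and is considerably messier, and (ii) to exploit that $e_Y$ is simultaneously spectral, injective and order-reflecting, so that it can be ``cancelled'' from both the monotonicity condition and the convergence square. The mild point to keep in mind is that $e_Y$ is not in general a left or right adjoint in $\STCOMP$ — there need not be a spectral retraction $VY\to Y$ — so spectrality of $f$ cannot be obtained from spectrality of $e_Y\circ f$ by a purely formal adjointness argument; the concrete properties of $e_Y$ just listed are what make the cancellation legitimate.
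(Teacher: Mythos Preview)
Your argument is correct, but it follows a different route from the paper's. The paper works directly with the relational characterisation of spectral distributors: writing the convergences as $a=\alpha_*$ and $b=\beta_*$, it observes that $U(f_*)=(Uf)_*$ (since $U(\le_Y\cdot f)=U(\le_Y)\cdot Uf$), so the relational square $f_*\cdot a=b\cdot U(f_*)$ reads $(f\alpha)_*=(\beta\,Uf)_*$, and the map-level square $f\alpha=\beta\,Uf$ follows because $(-)_*$ is faithful on monotone maps into the $T_0$ space $Y$. Your approach instead rewrites $\mate{f_*}=e_Y\circ f$ and cancels the spectral order-embedding $e_Y$ from both the monotonicity condition and the convergence square. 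Your route avoids invoking the relational square and the identity $U(f_*)=(Uf)_*$, needing only the map-level criterion for spectrality together with Proposition~\ref{prop:PropLowVietoris}(3),(4); the paper's route avoids any appeal to properties of $VY$ or $e_Y$. Both are short; contrary to your remark~(i), the relational unpacking is not messy once one notes $U(f_*)=(Uf)_*$.
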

\begin{proof}
Clearly, if $f$ is spectral, then $f_*$ is a spectral distributor. Assume now that $f_*$ is a spectral distributor. Then $f$ is certainly monotone. The convergence $a\colon UX\relto X$ of $X$ can be written as $a=\alpha_*$, where $\alpha\colon UX\to X$ is the monotone map which sends an ultrafilter $\fx\in UX$ to its smallest convergence point; similarly, $b=\beta_*$ with $\beta\colon UY\to Y$ being the monotone map sending an ultrafilter $\fy\in UY$ to its smallest convergence point. Applying $U\colon \REL\to\REL$ to the order relation on $X$ and $Y$ gives a reflexive and transitive (but not necessarily anti-symmetric) relation on $UX$ and $UY$ respectively, and then $Uf\colon UX\to UY$ is a monotone map. From $U(\le_Y\cdot f)=U(\le_Y)\cdot Uf$ it follows that $(Uf)_*=U(f_*)$, and from $f_*\cdot\alpha_*=\beta_*\cdot(Uf)_*$ we deduce that the diagram
\[
 \xymatrix{UX\ar[r]^{Uf}\ar[d]_\alpha & UY\ar[d]^\beta\\ X\ar[r]_f & Y}
\]
commutes.
\end{proof}

Similarly, for arbitrary topological spaces $X$ and $Y$ we characterise those relations $r\colon X\relto Y$ which correspond to continuous maps of type $\mate{r}\colon X\to VY$, we call such relations continuous distributors.

\begin{proposition}
Let $X$ and $Y$ be topological spaces with ultrafilter convergence $a\colon UX\relto X$ and $b\colon UY\relto Y$ respectively. Then a relation $r\colon X\relto Y$ is a continuous distributor $r\colon X\modto Y$ if and only if $r$ is a distributor between the underlying ordered sets and, moreover, 
\begin{align*}
\xymatrix{UX\ar|-{\object@{|}}[d]_a\ar|-{\object@{|}}[r]^{Ur} & UY\ar|-{\object@{|}}[d]^b\\
 X\ar|-{\object@{|}}[r]_r\ar@{}|-{\subseteq}[ur] & Y}
&&\text{and}&&
\xymatrix@C=8ex{X\ar|-{\object@{|}}[d]_\le\ar|-{\object@{|}}[r]^{(Ur)\cdot e_X} & UY\ar|-{\object@{|}}[d]^b\\
 X\ar|-{\object@{|}}[r]_r\ar@{}|-{\supseteq}[ur] & Y.}
\end{align*}
\end{proposition}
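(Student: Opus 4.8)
The plan is to unfold the definition of a continuous distributor and translate its two ingredients into properties of the relation $Ur$, using the explicit formula for $U\colon\REL\to\REL$ recalled above together with Lemma~\ref{lem:VietorisConv}. By definition, a relation $r\colon X\relto Y$ is a continuous distributor precisely when the map $\mate r\colon X\to PY$, $x\mapsto r(x):=\{y\in Y\mid x\,r\,y\}$, corestricts to a map $X\to VY$---equivalently, each $r(x)$ is a closed subset of $Y$---and this corestriction is continuous. I would treat these two requirements in turn, dealing first with the question whether $\mate r$ lands in $VY$ (which corresponds to the second displayed diagram) and then, assuming it does, with its continuity (the first diagram).

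For the corestriction: the formula for $Ur$ gives that, for $x\in X$, one has $e_X(x)\,(Ur)\,\fy$ (with $e_X\colon X\to UX$ the ultrafilter unit) if and only if $r(x)\in\fy$; combining this with the ultrafilter description of closure---$y\in\overline A$ exactly when $A$ belongs to some ultrafilter converging to $y$---the composite relation $b\cdot(Ur)\cdot e_X\colon X\relto Y$ sends $x$ to $\overline{r(x)}$. Since $r$ is a distributor we have $r\cdot{\le_X}=r$, so the second displayed condition says precisely that $\overline{r(x)}\subseteq r(x)$ for all $x$, i.e.\ that $\mate r$ factors through $VY$. I would also record here that a closed set is up-closed in the specialization order, which already accounts for the half of the distributor property that concerns the order of $Y$.

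Assuming now that $\mate r\colon X\to VY$ is a well-defined map, I would characterise its continuity, using the well-known fact that a map $g\colon X\to Z$ of topological spaces is continuous if and only if $g\cdot a\subseteq c\cdot Ug$ for the respective ultrafilter convergences. Thus continuity of $\mate r$ reads $\mate r\cdot a\subseteq c\cdot U\mate r$, where $c\colon UVY\relto VY$ is the convergence of $VY$. The decisive step is to compute, for an ultrafilter $\fx$ on $X$, the set $(c\cdot U\mate r)(\fx)$: by Lemma~\ref{lem:VietorisConv} a closed set $A$ lies in it exactly when $A\subseteq\bigcap_{\calA\in(U\mate r)(\fx)}\overline{\bigcup\calA}$, and a calculation through the formula for $Ur$ identifies this intersection with $\{y\in Y\mid\fx\,(Ur)\,\fy\text{ and }\fy\to y\text{ for some }\fy\}$, that is, with the image of $\fx$ under $b\cdot Ur$. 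Granting this, the inclusion $\mate r\cdot a\subseteq c\cdot U\mate r$ unwinds to the first displayed condition $r\cdot a\subseteq b\cdot Ur$. For the two directions of the equivalence: in the ``if'' direction the distributor clause is a hypothesis, while in the ``only if'' direction it follows from $\mate r$ being a continuous map (hence monotone for the specialization orders) with closed fibres.

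I expect the main obstacle to be the identification of $\bigcap_{\calA\in(U\mate r)(\fx)}\overline{\bigcup\calA}$ with the image of $\fx$ under $b\cdot Ur$. The inclusion ``$\supseteq$'' is obtained by unwinding $\fx\,(Ur)\,\fy$: for each $\calA\in(U\mate r)(\fx)$ the set $\bigcup\calA$ contains $\bigcup_{x'\in A}r(x')$ for a suitable $A\in\fx$ and hence belongs to $\fy$, so $y\in\overline{\bigcup\calA}$ whenever $\fy\to y$. For ``$\subseteq$'' one must, from a point $y$ in the intersection, produce an ultrafilter $\fy$ with $\fx\,(Ur)\,\fy$ and $\fy\to y$; this is done by checking that the sets $\bigcup_{x'\in A}r(x')$ (for $A\in\fx$) together with the neighbourhood filter of $y$ have the finite intersection property, which is precisely where the hypothesis on $y$---applied to $\calA=\mate r[A]$, which lies in $(U\mate r)(\fx)$---is used.
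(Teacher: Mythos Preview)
Your proposal is correct and follows essentially the same route as the paper: both arguments hinge on Lemma~\ref{lem:VietorisConv} to translate convergence in $VY$ into the intersection $\bigcap_{\calA}\overline{\bigcup\calA}$, and both use the finite-intersection-property argument (sets $\bigcup_{x'\in A}r(x')$ for $A\in\fx$ together with the neighbourhood filter of $y$) to manufacture the required ultrafilter $\fy$. The only difference is organisational: you isolate the equality $\bigcap_{\calA\in U\mate r(\fx)}\overline{\bigcup\calA}=(b\cdot Ur)(\fx)$ as a standalone computation and then read off both implications at once, whereas the paper carries out the corresponding element chase separately in each direction without naming this identification.
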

\begin{proof}
Clearly, if $r\colon X\relto Y$ is a continuous distributor, then $r$ is also a distributor between the underlying orders. Let now $x\in X$ and assume that $\fy\to y$ in $Y$ and $r(x)\in\fy$. Since $r(x)$ is closed, $y\in r(x)$ and therefore $x\le x\,r\,y$. Let now $\fx\in UX$, $x\in X$ and $y\in Y$ with $\fx\to x$ and $x\,r\,y$. Then $U\mate{r}(\fx)\to\mate{r}(x)$ and therefore, by Lemma \ref{lem:VietorisConv},
\[
 y\in\overline{\{y'\in Y\mid x'\,r\,y'\text{ for some $x'\in A$}\}}
\]
for all $A\in\fx$. Consequently, there is some $\fy\in UY$ with $\fy\to y$ and \[\{y'\in Y\mid x'\,r\,y'\text{ for some $x'\in A$}\}\in\fy\] for all $A\in\fx$, hence $\fx\,(Ur)\,\fy$. To see the reverse implication, we show first that $r(x)$ is closed, for all $x\in X$. In fact, if there is some $\fy\in UY$ with $r(x)\in\fy$ and $\fy\to y$ in $Y$, then there is some $x'\in X$ with $x\le x'\,r\,y$ and, since $r$ is a distributor, $x\,r\,y$. To see that $\mate{r}\colon X\to VY$ is continuous, assume that $\fx\to x$ in $X$. Then, for every $y\in\mate{r}(x)$, there is some $\fy\in UY$ with $\fy\to y$ and $\fx\,(Ur)\,\fy$, hence
\[
 y\in\overline{\{y'\in Y\mid x'\,r\,y'\text{ for some $x'\in A$}\}}
\]
for all $A\in\fx$. This proves $U\mate{r}(\fx)\to\mate{r}(x)$.
\end{proof}

The (order-theoretic) distributor $f^*\colon Y\modto X$ is not always a spectral distributor. In fact, in Proposition \ref{prop:PropLowVietoris} we have already characterised those spectral maps $f\colon X\to Y$ where $Vf$ has a left adjoint. Since $\mV$ is of Kock-Z\"oberlein type, it is easy to see that such a left adjoint is necessarily an algebra homomorphism, hence:

\begin{theorem}\label{thm:LeftAdjSpecRel}
For a morphism $f\colon X\to Y$ in $\STCOMP$, the following assertions are equivalent.
\begin{eqcond}
\item $f$ is down-wards open.
\item The spectral distributor $f_*\colon X\relto Y$ has a right adjoint in $\STCOMPDIST$.
\item the distributor $f^*\colon Y\modto X$ is a spectral distributor.
\end{eqcond}
\end{theorem}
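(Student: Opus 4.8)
The plan is to establish the cycle of implications (i) $\Rightarrow$ (ii) $\Rightarrow$ (iii) $\Rightarrow$ (i); of these, (i) $\Leftrightarrow$ (ii) carries all the topological content, while (ii) $\Leftrightarrow$ (iii) is a formal consequence of the uniqueness of adjoints together with the forgetful $2$-functor $\STCOMPDIST\to\DIST$. The main tool for the first equivalence is the comparison $2$-functor
\[
 K\colon \STCOMP_\mV\to\STCOMP,\qquad X\mapsto VX,\qquad (g\colon X\to VY)\longmapsto m_Y\cdot Vg,
\]
which identifies $\STCOMP_\mV$ with the (non-full) subcategory of $\STCOMP$ consisting of the free $\mV$-algebras and their homomorphisms; it is locally fully faithful because $g=K(g)\cdot e_X$, and the monad laws give $K(f_*)=K(e_Y\cdot f)=Vf$. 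I will also use throughout that $\STCOMP_\mV\simeq\STCOMPDIST^\co$ reverses $2$-cells, so that ``$f_*$ has a right adjoint in $\STCOMPDIST$'' is the same statement as ``$f_*$ has a left adjoint in $\STCOMP_\mV$''.

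For (i) $\Rightarrow$ (ii): assuming $f$ down-wards open, Proposition~\ref{prop:PropLowVietoris} provides a left adjoint $\ell$ of $Vf$ in $\TOP$. Since $VX$ and $VY$ are stably locally compact and a continuous map between stably locally compact spaces that is left adjoint in $\TOP$ is automatically spectral, $\ell$ is a morphism of $\STCOMP$; and since $\mV$ is of Kock--Z\"oberlein type and $\ell$ is left adjoint to the algebra homomorphism $Vf$, the map $\ell$ is itself a homomorphism $(VY,m_Y)\to(VX,m_X)$. Hence $\ell=K(g)$ for a unique spectral distributor $g\colon Y\modto X$, and, $K$ being locally fully faithful and identity-on-identities, the adjunction $\ell\dashv Vf$ yields $g\dashv f_*$ in $\STCOMP_\mV$, i.e.\ $f_*\dashv g$ in $\STCOMPDIST$. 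The converse (ii) $\Rightarrow$ (i) is this argument run backwards: a right adjoint $g$ of $f_*$ in $\STCOMPDIST$ is a left adjoint of $f_*$ in $\STCOMP_\mV$, so $K(g)\dashv Vf$ in $\STCOMP$, hence in $\TOP$, whence $f$ is down-wards open by Proposition~\ref{prop:PropLowVietoris}.

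For (ii) $\Leftrightarrow$ (iii): the forgetful $2$-functor $\STCOMPDIST\to\DIST$ is the identity on underlying relations and on $2$-cells, so it preserves adjunctions. Thus if $g$ is a right adjoint of $f_*$ in $\STCOMPDIST$, then $f_*\dashv g$ in $\DIST$; but $f_*\dashv f^*$ in $\DIST$ and right adjoints are unique, so $g=f^*$ as relations, i.e.\ $f^*$ is a spectral distributor. Conversely, if $f^*$ is a spectral distributor, then the inclusions $\le_X\subseteq f^*\cdot f_*$ and $f_*\cdot f^*\subseteq{\le_Y}$ that witness $f_*\dashv f^*$ in $\DIST$ are inclusions between spectral distributors (relational composites of spectral distributors being spectral), hence hold in $\STCOMPDIST$ as well and witness $f_*\dashv f^*$ there.

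The only genuinely non-formal ingredients are Proposition~\ref{prop:PropLowVietoris} and the Kock--Z\"oberlein fact, noted just before the statement, that a left adjoint of an algebra homomorphism between free $\mV$-algebras is again a homomorphism; everything else is bookkeeping with $K$ and with the $2$-cell reversal in $\STCOMP_\mV\simeq\STCOMPDIST^\co$. The one point that demands care is not to interchange left and right adjoints when passing through that reversal, or through the forgetful functor to $\DIST$, which is why at each step I keep track of the unit/counit inequalities explicitly rather than merely invoking ``$2$-functors preserve adjunctions''.
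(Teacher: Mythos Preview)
Your proof is correct and follows essentially the same approach as the paper, which merely records the key ingredients in the paragraph preceding the theorem (Proposition~\ref{prop:PropLowVietoris} for (i)$\Leftrightarrow$``$Vf$ has a left adjoint'', and the Kock--Z\"oberlein property for ``left adjoint of $Vf$ is a $\mV$-homomorphism'') and then states the result without further argument. Your write-up makes explicit the bookkeeping with the comparison functor $K$ and the passage through $\STCOMPDIST\to\DIST$ that the paper leaves implicit, but there is no substantive difference in strategy.
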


\section{Esakia dualities}\label{sect:Esakia}

Besides Boolean algebras, another important class of distributive lattices is the class of Heyting algebras, which correspond under Stone (resp.\ Priestley) duality to certain spectral (resp.\ Priestley) spaces. The precise description of this correspondence dates back to 1974; in fact, quoting \citep{DG03}: ``The description of the restricted Priestley duality for Heyting algebras was first worked out by M.~Adams. The paper in which the description appeared was distributed to a number of those working on applications of Priestley duality but was never published \dots\ Esakia gives a duality for Boolean algebras with an additional closure operation and then indicates how to use this duality to obtain a duality for Heyting algebras. It should be noted that Esakia's result was obtained without reference to either distributive lattices or Priestley duality. Indeed, because the proof of his duality for Heyting algebras is indirect and missing many details, it is not immediately clear from the paper that Esakia's duality actually is the restricted Priestley duality.''. The result of Esakia mentioned above is published in \citep{Esa74}. The Priestley spaces corresponding to Heyting algebras are often called Esakia spaces (the designation Heyting spaces is used in \citep{DG03}), they are precisely those Priestley spaces $X$ where the down-closure of every open subset of $X$ is again open. Viewing $X$ as a spectral space, $X$ is an Esakia space precisely when, for every open subset $A$ of the patch space $X_p$ of $X$, its down-closure $\downc A$ is open in $X_p$; and $\downc A$ is open in $X_p$ if and only if $\downc A$ is open in $X$. 

In this section we wish to make the point that this duality for Heyting algebras arises more naturally when considering the larger category of spectral spaces and spectral distributors. For technical reasons we will consider here co-Heyting algebras, that is, distributive lattices $L$ where $L^\op$ is a Heyting algebra.

To start, we extend the notion of Esakia space to stably locally compact spaces on the obvious way.

\begin{definition}
A stably locally compact space $X$ is called an \df{Esakia space} whenever, for every open subset $A$ of the patch space $X_p$ of $X$, its down-closure $\downc A$ is open in $X$.
\end{definition}

Bearing in mind Theorem \ref{thm:LeftAdjSpecRel}, one obtains the following characterisation (see also \citep{Hof14}).

\begin{theorem}\label{thm:CharEsakia}
For a stably locally compact space $X$, the following assertions are equivalent.
\begin{eqcond}
\item $X$ is an Esakia space.
\item The spectral map $i\colon X_p\to X,\,x\mapsto x$ is down-wards open.
\item The spectral distributor $i_*\colon X_p\modto X$ has a right adjoint (necessarily given by $i^*$).
\item $X$ is a split subobject of a compact Hausdorff space $Y$ in $\STCOMPDIST$.
\end{eqcond}
If $X$ is spectral, then the space $Y$ in the last assertion can be chosen as a Stone space.
\end{theorem}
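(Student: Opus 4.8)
The plan is to dispatch the cheap equivalences (i)$\Leftrightarrow$(ii)$\Leftrightarrow$(iii) first, then to extract the splitting in~(iv) from~(iii) by hand --- which simultaneously yields the final assertion about Stone spaces --- and finally to recover~(i) from~(iv), where the actual work lies. For (i)$\Leftrightarrow$(ii): the map $i\colon X_p\to X$ is a continuous bijection on underlying sets, so $\downc i[A]=\downc A$ for every $A\subseteq X_p$, and in both the definition of an Esakia space and the definition of a down-wards open map this down-closure is formed with respect to the underlying order~$\le_X$; hence the two conditions say literally the same thing. Since $X_p$ is compact Hausdorff, hence stably locally compact, the spectral map $i\colon X_p\to X$ of~(ii) is a morphism of $\STCOMP$, so Theorem~\ref{thm:LeftAdjSpecRel} applies verbatim and delivers (ii)$\Leftrightarrow$(iii); the identification of the right adjoint with $i^*$ follows from $i_*\dashv i^*$ in $\DIST$ together with the uniqueness of adjoints.

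For (iii)$\Rightarrow$(iv) I would take $Y=X_p$, which is compact Hausdorff. Because $i$ is a bijection on points, a direct calculation gives $i_*\cdot i^*=1_X$ in $\STCOMPDIST$ (where the identity on~$X$ is $\le_X$); granting~(iii), $i^*$ is a spectral distributor, so the pair $i^*\colon X\modto X_p$ and $i_*\colon X_p\modto X$ exhibits $X$ as a split subobject of the compact Hausdorff space $X_p$. If moreover $X$ is spectral, then $X_p$ is precisely the Hochster patch of~$X$, which is a Stone space, and so $Y$ may be chosen to be a Stone space; this settles the last assertion.

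The heart of the matter is (iv)$\Rightarrow$(i). So assume $X$ is a split subobject of a compact Hausdorff space~$Y$, witnessed by spectral distributors $s\colon X\modto Y$ and $r\colon Y\modto X$ with $r\cdot s=1_X$, and fix an open subset $A$ of $X_p$; I would prove that $\downc_X A$ is open in~$X$. By the isomorphism $\ORDCH\simeq\STCOMP$, a subset of $X$ is open in $X$ if and only if it is down-closed for $\le_X$ and open in the compact Hausdorff space $X_p$; since $\downc_X A$ is down-closed by construction, it remains to show that its complement $\{w\in X\mid \upc_X w\cap A=\varnothing\}$ is closed in $X_p$. Now $r\cdot s=1_X$, spelled out in terms of relations, is the equality $\upc_X w=\bigcup_{y\in\mate{s}(w)}\mate{r}(y)$, so that complement coincides with $\{w\in X\mid\mate{s}(w)\subseteq Q\}$, where $Q=\{y\in Y\mid\mate{r}(y)\cap A=\varnothing\}$. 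Since $\mate{s}\colon X\to VY$ is spectral it is continuous, and therefore continuous as a map $X_p\to VY$ as well; hence it is enough to see that $Q$ is closed in~$Y$, i.e.\ that $\mate{r}^{-1}\bigl(\{C\in VX\mid C\cap A\neq\varnothing\}\bigr)$ is open in~$Y$. As $Y$ is compact Hausdorff, a spectral map $Y\to VX$ is the same thing as a continuous map $Y\to(VX)_p$ into the patch space, so the whole argument comes down to the topological lemma that $\{C\in VX\mid C\cap A\neq\varnothing\}$ is open in $(VX)_p$ for every open subset $A$ of $X_p$.

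That lemma is the step I expect to be the main obstacle: it concerns the way the lower Vietoris functor interacts with the (generalised) patch functor, and I would prove it by realising $(VX)_p$ as a subspace of $\widehat{V}(X_p)$ --- equivalently, by describing the patch topology of $VX$ in terms of the topology of $X_p$ --- invoking the analysis of $\mV$ in \cite{Sch93}, \citep{Esc98} and \cite{Hof14}. A more structural packaging of the same implication, which one might prefer, is to show that Esakia spaces are closed under split subobjects in $\STCOMPDIST$ (compact Hausdorff spaces being trivially Esakia, as they coincide with their own patch): given the retraction, one lifts $r$ to a spectral distributor $r_p\colon Y\modto X_p$ with the same underlying relation as $r$, and transfers the trivial adjunction at $Y$ to the adjunction $i_*\dashv i^*$ through the calculus of mates; the sole non-formal ingredient is, once more, that $r_p$ is a spectral distributor, which is the same Vietoris-versus-patch computation.
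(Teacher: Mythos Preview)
Your treatment of (i)$\Leftrightarrow$(ii)$\Leftrightarrow$(iii) and (iii)$\Rightarrow$(iv) is correct and matches the paper's (terse) indication: these follow from the definitions and Theorem~\ref{thm:LeftAdjSpecRel}, with $Y=X_p$ witnessing the splitting and yielding the Stone refinement when $X$ is spectral.

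The gap is in (iv)$\Rightarrow$(i). Your reduction is fine up to the point where you need $Q^c=\{y\in Y\mid\mate{r}(y)\cap A\neq\varnothing\}$ to be open in $Y$, but the ``topological lemma'' you isolate --- that $A^\Diamond=\{C\in VX\mid C\cap A\neq\varnothing\}$ is open in $(VX)_p$ for every $A$ open in $X_p$ --- is \emph{false} for general stably locally compact $X$. Take the Priestley space $X=\mathbb{N}\cup\{\infty\}$ with the one-point-compactification topology and order $\infty<n$ for all $n$; this is spectral but not Esakia (e.g.\ $\downc\{0\}=\{0,\infty\}$ is not open). The only closed subsets of the corresponding spectral space are $\varnothing$, the finite subsets of $\mathbb{N}$, and $X$; using Lemma~\ref{lem:VietorisConv} one checks that the sequence $(\{n\})_{n}$ converges to $X$ in $(VX)_p$. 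For $A=\{0\}$ one has $X\in A^\Diamond$ while $\{n\}\notin A^\Diamond$ for $n\neq 0$, so $A^\Diamond$ is not open in $(VX)_p$. Your proposed proof of the lemma --- embedding $(VX)_p$ into $\widehat{V}(X_p)$ --- fails for exactly the same reason: in $\widehat{V}(X_p)$ the sequence $(\{n\})_n$ converges to $\{\infty\}$, not to $X$, so the inclusion $(VX)_p\hookrightarrow\widehat{V}(X_p)$ is not continuous. The alternative ``lift $r$ to $r_p\colon Y\modto X_p$'' runs into the identical obstruction.

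The underlying issue is that your lemma is essentially a reformulation of condition~(ii): one can check (via Proposition~\ref{prop:PropLowVietoris}(2)) that $i$ is down-wards open precisely when the set-inclusion $VX\hookrightarrow V(X_p)$ is continuous, i.e.\ when every $A^\Diamond$ is open in $VX$ itself. So attempting to prove the lemma without using the splitting $(r,s)$ amounts to proving (ii) for arbitrary $X$, which is of course impossible. The paper does not give a self-contained argument for (iv)$\Rightarrow$(i) either, referring instead to \citep{Hof14}; one route that does work is to argue on the dual (lattice) side in the spirit of the proof of (3)$\Rightarrow$(1) in the subsequent theorem, where the co-Heyting operation is built directly from the splitting data.
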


We write $\GESADIST$ to denote the full subcategory of $\STCOMPDIST$ defined by all Esakia spaces, and $\ESADIST$ stands for the full subcategory of $\GESADIST$ defined by all spectral spaces. Recall that $\SPECDIST\simeq\DLAT_{\bot,\vee}^\op$, and one easily sees that the category $\DLAT_{\bot,\vee}$ is idempotent split complete. All told:

\begin{corollary}
The category $\ESADIST$ is the idempotent split completion of $\STONEREL$.
\end{corollary}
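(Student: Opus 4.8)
The plan is to combine Theorem~\ref{thm:CharEsakia} with the well-known duality $\SPECDIST\simeq\DLAT_{\bot,\vee}^\op$ and the general fact that passing to the idempotent split completion commutes with equivalences of categories. First I would recall the characterisation of the idempotent split completion: for any category $\calC$, its idempotent split completion $\overline{\calC}$ is, up to equivalence, the unique idempotent split complete category into which $\calC$ embeds fully and faithfully in such a way that every object of $\overline{\calC}$ is a split subobject (equivalently, a split quotient) of an object coming from $\calC$. Thus to identify $\ESADIST$ with the idempotent split completion of $\STONEREL$, it suffices to verify three things: (a) $\STONEREL$ is a full subcategory of $\ESADIST$; (b) every object of $\ESADIST$ is a split subobject in $\ESADIST$ of some Stone space; and (c) $\ESADIST$ is itself idempotent split complete.

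For (a), every Stone space is in particular a spectral space, and the underlying order of a Stone space is discrete, so a Stone space is trivially an Esakia space (the patch space $X_p$ coincides with $X$, and every open set is already down-closed); hence $\STONEREL$ sits inside $\ESADIST$ as a full subcategory. For (b), this is precisely the content of the equivalence (1)$\Leftrightarrow$(4) in Theorem~\ref{thm:CharEsakia} together with its final sentence: a spectral space $X$ is an Esakia space if and only if it is a split subobject of a compact Hausdorff space $Y$ in $\STCOMPDIST$, and when $X$ is spectral one may take $Y$ to be a Stone space. One should note that the relevant splitting $i_* \colon X_p \to X$ and its adjoint retraction $i^*$ live in $\STCOMPDIST$; since both endpoints are spectral spaces, this is a splitting in the full subcategory $\SPECDIST$, hence in $\ESADIST$, and the idempotent $i^* \cdot i_*$ on the Stone space $X_p$ is a morphism of $\STONEREL$. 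For (c), I would use the other half of the duality: $\SPECDIST \simeq \DLAT_{\bot,\vee}^{\op}$, so $\ESADIST$ is idempotent split complete if and only if the corresponding full subcategory of $\DLAT_{\bot,\vee}$ is closed under splitting idempotents in $\DLAT_{\bot,\vee}$, and it suffices to know that $\DLAT_{\bot,\vee}$ itself is idempotent split complete and that the co-Heyting algebras form a replete, idempotent-closed subcategory. Idempotent split completeness of $\DLAT_{\bot,\vee}$ follows because a split idempotent on a distributive lattice, computed in $\DLAT_{\bot,\vee}$, has a splitting object whose underlying poset is a retract of the original lattice, carrying the induced finite suprema, and one checks this retract is again a distributive lattice; and the image of a $\bot,\vee$-preserving idempotent that splits, when restricted to co-Heyting algebras, is again co-Heyting because the co-Heyting negation (equivalently, the binary ``subtraction'' operation) is preserved along the splitting maps.

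The main obstacle I anticipate is bookkeeping rather than conceptual: making precise that $\ESADIST$ is exactly the subcategory of $\SPECDIST$ corresponding under the duality to co-Heyting algebras inside $\DLAT_{\bot,\vee}$ (this is asserted in the introduction but must be pinned down to run argument (c) cleanly), and checking that ``splitting an idempotent'' is computed the same way whether we work in $\DLAT_{\bot,\vee}$, in $\SPECDIST$, or in $\STCOMPDIST$. Once one is confident that the embeddings $\STONEREL \hookrightarrow \ESADIST \hookrightarrow \SPECDIST \hookrightarrow \STCOMPDIST$ are all full and that split subobjects are detected consistently, the corollary is immediate: $\ESADIST$ is idempotent split complete by (c), contains $\STONEREL$ fully by (a), and every object of $\ESADIST$ is a split subobject of an object of $\STONEREL$ by (b); these are exactly the axioms characterising $\ESADIST$ as the idempotent split completion of $\STONEREL$. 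An alternative, perhaps cleaner, route would be to invoke directly that idempotent split completion is functorial on equivalences and that $\ESADIST$ is (by Theorem~\ref{thm:CharEsakia}) the closure of $\STONEREL$ under split subobjects inside the idempotent split complete category $\SPECDIST$, which is automatically the idempotent split completion; I would present whichever of the two is shorter given the lemmas already at hand.
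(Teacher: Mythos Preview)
Your ``alternative, perhaps cleaner, route'' at the end is exactly what the paper does: since $\SPECDIST\simeq\DLAT_{\bot,\vee}^\op$ and $\DLAT_{\bot,\vee}$ is idempotent split complete, $\SPECDIST$ is idempotent split complete; then Theorem~\ref{thm:CharEsakia} identifies $\ESADIST$ as the closure of $\STONEREL$ under split subobjects inside $\SPECDIST$, which is automatically the idempotent split completion. That is the whole argument, and the paper states it in one sentence.

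Your main (a)--(b)--(c) decomposition is also correct, but step~(c) is both redundant and, as written, forward-referencing. It is redundant because once you know the ambient category $\SPECDIST$ is idempotent split complete, the retract-closure of $\STONEREL$ is \emph{by construction} idempotent split complete; you do not need to verify this separately for $\ESADIST$. It is forward-referencing because your proposed verification of~(c) goes through the identification of Esakia spaces with co-Heyting algebras under the duality, and in the paper that identification is only established \emph{after} this corollary (indeed, the corollary and its algebraic twin are the two halves that are then matched up). So while nothing you wrote is wrong, the paper avoids the detour entirely by working in the ambient idempotent split complete category $\SPECDIST$ from the start; your alternative route is the one to present.
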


The algebraic analogue to Theorem \ref{thm:CharEsakia} is essentially proven in \citep{MT46}. For a distributive lattice $L$, we consider its Booleanisation $j\colon L\hrw B$ which is given by any epimorphic embedding in $\DLAT$ of $L$ into a Boolean algebra $B$ (it is a completion in the sense of \citep{BGH92}). Translated to $\SPEC$, the homomorphism $j$ corresponds to the spectral map $i\colon X_p\to X$. Furthermore, every lattice homomorphism $f\colon L_1\to L_2$ extends to a homomorphism $\overline{f}\colon B_1\to B_2$ between the corresponding Boolean algebras.

\begin{theorem}
For a distributive lattice $L$, the following assertions are equivalent.
\begin{enumerate}
\item\label{Heyt:cond1} $L$ is a co-Heyting algebra.
\item\label{Heyt:cond2} The lattice homomorphism $j\colon L\to B$ has a left adjoint in $\DLAT_{\bot,\vee}$ $j^+\colon B\to L$.
\item\label{Heyt:cond3} $L$ is a split subobject of a Boolean algebra in $\DLAT_{\bot,\vee}$.
\end{enumerate}
If $f\colon L_1\to L_2$ be a lattice homomorphism between Heyting algebras, then $f$ preserves the co-Heyting operation if and only if the diagram
\[
 \xymatrix{B_1\ar[r]^{\overline{f}}\ar[d]_{j_1^+} & B_2\ar[d]^{j_2^+}\\ L_1\ar[r]_f & L_2}
\]
commutes.
\end{theorem}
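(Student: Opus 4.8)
The plan is to prove everything algebraically, in parallel with the topological argument behind Theorem~\ref{thm:CharEsakia}, using as the only non-formal input about the Booleanisation $j\colon L\hrw B$ the classical normal form: every element of $B$ has the shape $\bigvee_{i=1}^{n}\bigl(j(a_i)\wedge\neg j(b_i)\bigr)$ for suitable $a_i,b_i\in L$ (write a Boolean term over $j(L)$ in disjunctive normal form and collapse the positive and the negative literals using that $j(L)$ is a sublattice of $B$). Throughout, for elements $a,b$ of a co-Heyting algebra I write $a\setminus b$ for the co-Heyting difference, that is, the least $c$ with $a\le b\vee c$, and I use freely the Boolean identity $x\wedge\neg b\le z\Leftrightarrow x\le z\vee b$. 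Recall also that $j$, being an embedding, reflects the order, and that $\dashv$ in $\DLAT_{\bot,\vee}$ refers to the pointwise order on morphisms, so that $j^{+}\dashv j$ means $j^{+}j\le 1_L$ and $1_B\le jj^{+}$.

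For \eqref{Heyt:cond1}$\Rightarrow$\eqref{Heyt:cond2} I would simply \emph{define}
\[
 j^{+}\colon B\to L,\qquad j^{+}(y)=\min\{\ell\in L\mid y\le j(\ell)\};
\]
a minimum is unique if it exists, so the only issue is existence, and this is exactly where the hypothesis enters. For $y=\bigvee_i\bigl(j(a_i)\wedge\neg j(b_i)\bigr)$ the set $\{\ell\mid y\le j(\ell)\}$ unwinds, via the Boolean identity above and order-reflection, to $\{\ell\mid a_i\le\ell\vee b_i\text{ for all }i\}=\upc\bigl(\bigvee_i(a_i\setminus b_i)\bigr)$, which is a principal up-set; hence $j^{+}(y)=\bigvee_i(a_i\setminus b_i)$ exists. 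From the $\min$-formula it is then routine that $j^{+}$ preserves finite suprema (a finite intersection of principal up-sets of $L$ is one), that $j^{+}j=1_L$ (because $a\setminus\bot=a$), and that $1_B\le jj^{+}$ (because $y\le j(j^{+}(y))$ by construction). Thus $j^{+}$ lies in $\DLAT_{\bot,\vee}$ and $j^{+}\dashv j$.

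The step \eqref{Heyt:cond2}$\Rightarrow$\eqref{Heyt:cond3} is formal: $j^{+}\dashv j$ together with order-reflection of $j$ forces $j^{+}j=1_L$, so $j$ is a split monomorphism in $\DLAT_{\bot,\vee}$. For \eqref{Heyt:cond3}$\Rightarrow$\eqref{Heyt:cond1}, let $s\colon L\to C$ and $r\colon C\to L$ be morphisms of $\DLAT_{\bot,\vee}$ with $rs=1_L$ and $C$ Boolean; I claim $a\setminus b:=r\bigl(s(a)\wedge\neg s(b)\bigr)$ is a co-Heyting difference in $L$. Since $s,r$ preserve finite suprema (hence are monotone) and $rs=1_L$, distributivity in $C$ gives $b\vee(a\setminus b)=r\bigl(s(b)\vee s(a)\bigr)=a\vee b\ge a$; and if $a\le b\vee c$ with $c\in L$, then $s(a)\le s(b)\vee s(c)$, so $s(a)\wedge\neg s(b)\le s(c)$ in $C$ and therefore $a\setminus b\le r(s(c))=c$. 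As $a,b$ were arbitrary, $L$ is a co-Heyting algebra; this closes the cycle and proves the equivalence of \eqref{Heyt:cond1}--\eqref{Heyt:cond3}.

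For the statement about morphisms, observe first that a lattice homomorphism between Boolean algebras automatically preserves complements (these are unique), so the extension $\overline f\colon B_1\to B_2$ — the $\DLAT$-morphism with $\overline f\,j_1=j_2\,f$, which exists by the extension property recalled above and is unique since $j_1$ is epic — is a Boolean homomorphism. Suppose the square commutes. Evaluating $f\circ j_1^{+}=j_2^{+}\circ\overline f$ at $j_1(a)\wedge\neg j_1(b)$ and using $\overline f\bigl(j_1(a)\wedge\neg j_1(b)\bigr)=j_2(f(a))\wedge\neg j_2(f(b))$ together with $j_i^{+}\bigl(j_i(x)\wedge\neg j_i(y)\bigr)=x\setminus y$ yields $f(a\setminus b)=f(a)\setminus f(b)$, i.e.\ $f$ preserves the co-Heyting operation. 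Conversely, assume $f$ preserves $\setminus$. For $y=\bigvee_i\bigl(j_1(a_i)\wedge\neg j_1(b_i)\bigr)$ one has, since $f$, $j_1^{+}$, $j_2^{+}$ and $\overline f$ all preserve finite suprema, $f(j_1^{+}(y))=\bigvee_i f(a_i\setminus b_i)=\bigvee_i\bigl(f(a_i)\setminus f(b_i)\bigr)$ and likewise $j_2^{+}(\overline f(y))=\bigvee_i j_2^{+}\bigl(j_2(f(a_i))\wedge\neg j_2(f(b_i))\bigr)=\bigvee_i\bigl(f(a_i)\setminus f(b_i)\bigr)$, so the square commutes.

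The one genuinely non-routine point is the existence of the minimum defining $j^{+}$ in \eqref{Heyt:cond1}$\Rightarrow$\eqref{Heyt:cond2}: one needs the normal form of $B$ over $L$, and it is precisely the co-Heyting hypothesis that collapses the filter $\{\ell\mid y\le j(\ell)\}$ to a principal one. Everything else reduces to manipulation of finite suprema and the defining property of $\setminus$, exactly mirroring the proof of Theorem~\ref{thm:CharEsakia}.
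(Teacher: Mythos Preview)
Your proof is correct and follows the same cycle \eqref{Heyt:cond1}$\Rightarrow$\eqref{Heyt:cond2}$\Rightarrow$\eqref{Heyt:cond3}$\Rightarrow$\eqref{Heyt:cond1} as the paper, with essentially the same argument for \eqref{Heyt:cond3}$\Rightarrow$\eqref{Heyt:cond1} (transporting the co-Heyting operation along the splitting; your $s(a)\wedge\neg s(b)$ is the Boolean instance of the paper's $s(x)\rightarrowtail s(y)$). Where the paper outsources \eqref{Heyt:cond1}$\Leftrightarrow$\eqref{Heyt:cond2} to McKinsey--Tarski and declares the morphism statement ``clear'', you spell both out explicitly via the disjunctive normal form of the Booleanisation, which is precisely the concrete content behind those references.
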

\begin{proof}
The equivalence \eqref{Heyt:cond1}$\Lrlw$\eqref{Heyt:cond2} is shown in \citep{MT46}, and \eqref{Heyt:cond2}$\Rw$\eqref{Heyt:cond3} is obvious. To see \eqref{Heyt:cond3}$\Rw$\eqref{Heyt:cond1}, let $s\colon L\to B$ and $r\colon B\to L$ in $\DLAT_{\bot,\vee}$ with $r s=1_L$ and $B$ a co-Heyting algebra with co-Heyting operation $a\rightarrowtail b$, for $a,b\in B$. For $x,y\in H$, put
\[
x\rightarrowtriangle y=r(s(x)\rightarrowtail s(y)).
\]
Then, for all $x,y,z\in L$,
\begin{align*}
y\le x\vee z &\;\Lrlw\; s(y)\le s(x)\vee s(z)\\
 &\;\Lrlw\; s(x)\rightarrowtail s(y)\le s(z)\\
 &\;\,\Lrw\; x\rightarrowtriangle y=r(s(x)\rightarrowtail s(y))\le rs(z)=z.
\end{align*}
To conclude the missing implication, just observe that $s(x)\rightarrowtail s(y)\le sr(s(x)\rightarrowtail s(y))$. The second statement is clear.
\end{proof}

We denote the full subcategory of $\DLAT_{\bot,\vee}$ defined by all co-Heyting algebras by $\COHEYT_{\bot,\vee}$. 

\begin{corollary}
The category $\COHEYT_{\bot,\vee}$ is the idempotent split completion of $\BOOL_{\bot,\vee}$.
\end{corollary}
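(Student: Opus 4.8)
The plan is to invoke the standard recognition criterion for idempotent split completions: if $\mathcal{A}$ is a full subcategory of an idempotent split complete category $\mathcal{D}$ such that every object of $\mathcal{D}$ is a split subobject of an object of $\mathcal{A}$, then $\mathcal{D}$, together with the inclusion, is equivalent to the idempotent split completion of $\mathcal{A}$; this is a routine consequence of the universal property. Here one takes $\mathcal{A}=\BOOL_{\bot,\vee}$ and $\mathcal{D}=\COHEYT_{\bot,\vee}$, so three things have to be checked: that $\BOOL_{\bot,\vee}$ is a full subcategory of $\COHEYT_{\bot,\vee}$; that $\COHEYT_{\bot,\vee}$ is idempotent split complete; and that every co-Heyting algebra is a split subobject of a Boolean algebra in $\COHEYT_{\bot,\vee}$.

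The first and third points I would read off directly from the preceding theorem. A Boolean algebra $B$ is a co-Heyting algebra because complementation exhibits $B^\op$ as isomorphic to $B$ and hence as a Heyting algebra; since $\BOOL_{\bot,\vee}$ and $\COHEYT_{\bot,\vee}$ are by definition the full subcategories of $\DLAT_{\bot,\vee}$ spanned by the Boolean, respectively co-Heyting, algebras, this makes $\BOOL_{\bot,\vee}\hookrightarrow\COHEYT_{\bot,\vee}$ a full embedding. For the third point, the implication \eqref{Heyt:cond1}$\Rightarrow$\eqref{Heyt:cond3} of the theorem says exactly that a co-Heyting algebra $L$ is a split subobject of a Boolean algebra $B$ in $\DLAT_{\bot,\vee}$ --- concretely via the Booleanisation $j\colon L\to B$, which is a split monomorphism with retraction its left adjoint $j^+$ --- and since $B$ is Boolean and $L$ is co-Heyting, both $j$ and $j^+$ are morphisms of $\COHEYT_{\bot,\vee}$.

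For the second point I would argue that $\COHEYT_{\bot,\vee}$ inherits idempotent split completeness from $\DLAT_{\bot,\vee}$, which the excerpt records as idempotent split complete: it is enough to show that $\COHEYT_{\bot,\vee}$ is closed under the splittings of idempotents produced in $\DLAT_{\bot,\vee}$. Given an idempotent $e$ in $\COHEYT_{\bot,\vee}$ on a co-Heyting algebra $L$, split it in $\DLAT_{\bot,\vee}$ through an object $M$; then $M$ is a split subobject of $L$, while $L$ is a split subobject of a Boolean algebra $B$ by \eqref{Heyt:cond1}$\Rightarrow$\eqref{Heyt:cond3}, so $M$ is a split subobject of $B$, whence $M$ is co-Heyting by \eqref{Heyt:cond3}$\Rightarrow$\eqref{Heyt:cond1}. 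The two splitting morphisms are then $\DLAT_{\bot,\vee}$-morphisms between co-Heyting algebras and therefore morphisms of the full subcategory $\COHEYT_{\bot,\vee}$, so $e$ already splits in $\COHEYT_{\bot,\vee}$.

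I expect the whole argument to be essentially formal once the preceding theorem (in substance \citep{MT46}) is available; the only place needing any care is the closure step above, where one transports the splitting that $\DLAT_{\bot,\vee}$ provides back into $\COHEYT_{\bot,\vee}$ using that split subobjects compose together with the equivalence \eqref{Heyt:cond1}$\Leftrightarrow$\eqref{Heyt:cond3}. No genuine obstacle is anticipated.
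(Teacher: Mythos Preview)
Your argument is correct and is exactly the reasoning the paper leaves implicit: the corollary is stated without proof, immediately after the theorem characterising co-Heyting algebras as the split subobjects of Boolean algebras in $\DLAT_{\bot,\vee}$, with the idempotent split completeness of $\DLAT_{\bot,\vee}$ already recorded earlier. Your three-step unpacking via the recognition criterion (full embedding, closure of $\COHEYT_{\bot,\vee}$ under splittings in $\DLAT_{\bot,\vee}$, and every co-Heyting algebra a retract of a Boolean one) is precisely what the paper intends the reader to supply.
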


In the sequel $\ESA$ denotes the full subcategory of $\SPEC$ defined by Esakia spaces, and $\COHEYT$ the full subcategory of $\DLAT$ defined by co-Heyting algebras. From the discussion above we obtain the main result of this section.
\begin{theorem}
The equivalence $\SPECDIST\simeq\DLAT_{\bot,\vee}^\op$ restricts to an equivalence \[\ESADIST\simeq\COHEYT_{\bot,\vee}^\op\] which, when restricted to maps and lattice homomorphisms, yields $\ESA\simeq\COHEYT^\op$. Moreover, a morphism $f\colon L_2\to L_1$ in $\COHEYT$ preserves the co-Heyting operation if and only if the corresponding spectral map $g\colon X_1\to X_2$ makes the diagram of spectral distributors
\[
 \xymatrix{X_1\ar|-{\object@{o}}[d]_{i_1^*}\ar|-{\object@{o}}[r]^{g_*} & X_2\ar|-{\object@{o}}[d]^{i_2^*}\\
	    (X_1)_p\ar|-{\object@{o}}[r]_{g} & (X_2)_p}
\]
commutative; element-wise: for all $x\in X_1$ and $y\in X_2$ with $g(x)\le y$, there is some $x'\in X_1$ with $x\le x'$ and $g(x')=y$.
\end{theorem}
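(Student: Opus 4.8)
The plan is to derive everything from the two characterisation theorems already established---Theorem~\ref{thm:CharEsakia} on the spectral side and its algebraic counterpart above---by transporting their diagrams across the duality $\SPECDIST\simeq\DLAT_{\bot,\vee}^\op$ and then reading the outcome off pointwise. First I would dispose of the restriction statements. An equivalence preserves and reflects retracts, and under $\SPECDIST\simeq\DLAT_{\bot,\vee}^\op$ Stone spaces correspond to Boolean algebras; comparing condition~(iv) of Theorem~\ref{thm:CharEsakia} with condition~(3) of the algebraic theorem---both identifying the relevant objects as retracts, of Stone spaces resp.\ of Boolean algebras---then shows that the objects of $\ESADIST$ are exactly those sent to the objects of $\COHEYT_{\bot,\vee}^\op$, so the equivalence restricts to a full equivalence $\ESADIST\simeq\COHEYT_{\bot,\vee}^\op$. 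For the further restriction $\ESA\simeq\COHEYT^\op$ I would invoke the fact recalled in Section~\ref{sect:StoneHalmos} that the Halmos-type duality extends Stone's $\SPEC^\op\simeq\DLAT$ along the embeddings $\SPEC\hookrightarrow\SPECDIST$, $f\mapsto f_*$, and $\DLAT\hookrightarrow\DLAT_{\bot,\vee}$: a spectral distributor has the form $g_*$ for a spectral map $g$ precisely when its algebraic dual is a lattice homomorphism, and restricting both sides to Esakia spaces and co-Heyting algebras gives $\ESA\simeq\COHEYT^\op$.

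For the ``moreover'' clause I would take a lattice homomorphism $f\colon L_2\to L_1$ between co-Heyting algebras with Stone-dual spectral map $g\colon X_1\to X_2$, and write $j_k\colon L_k\to B_k$ for the Booleanisations, $j_k^+\colon B_k\to L_k$ for their left adjoints in $\DLAT_{\bot,\vee}$, and $\overline{f}\colon B_2\to B_1$ for the induced homomorphism. By the second statement of the algebraic theorem above, $f$ preserves the co-Heyting operation if and only if the square in $\DLAT_{\bot,\vee}$ formed by $\overline{f}$, $j_2^+$, $j_1^+$ and $f$ commutes. I would then transport this square across $\SPECDIST\simeq\DLAT_{\bot,\vee}^\op$ arrow by arrow: $j_k$ corresponds to the distributor $(i_k)_*$ of the spectral map $i_k\colon(X_k)_p\to X_k$ (the text already notes that $j$ is dual to $i$); $\overline{f}$, being dual to $g$ regarded as a spectral map $(X_1)_p\to(X_2)_p$ of the patch spaces, corresponds to the distributor drawn as $g$ along the bottom of the diagram---and, the patch orders being discrete, this is just the graph of $g$; $f$ corresponds to $g_*$, since it is a lattice homomorphism; and $j_k^+$ corresponds to $i_k^*$, which is where Theorem~\ref{thm:CharEsakia}\,(iii) enters, the adjunction $j_k^+\dashv j_k$ being carried to the adjunction $(i_k)_*\dashv i_k^*$ in $\STCOMPDIST$. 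Hence the algebraic square commutes precisely when the displayed square of spectral distributors does.

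To finish I would compute the two composites pointwise. By transitivity, $i_2^*\cdot g_*$ relates $x\in X_1$ to $y\in X_2$ exactly when $g(x)\le y$, whereas $g\cdot i_1^*$ relates $x$ to $y$ exactly when $x\le x'$ and $g(x')=y$ for some $x'\in X_1$; since $g$ is monotone one inclusion holds automatically, so commutativity of the square is equivalent to the assertion that for all $x\in X_1$ and $y\in X_2$ with $g(x)\le y$ there is $x'\in X_1$ with $x\le x'$ and $g(x')=y$, which is the stated condition.

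The only delicate point I anticipate is the transport step: the duality is contravariant and carries the inclusion order on distributors to the opposite of the order in $\STCOMP_\mV$, so the directions of the four arrows and of the order-enrichment must be tracked with care to be certain that $j_k^+$ lands on $i_k^*$ rather than on $(i_k)_*$, and that the algebraic square transposes to the one drawn rather than to its mirror image. Theorem~\ref{thm:CharEsakia}\,(iii), which singles out $i^*$ as the right adjoint of $i_*$, is precisely what removes this ambiguity; once the correspondence of the arrows is fixed, the remaining verifications are routine.
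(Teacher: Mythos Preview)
Your proposal is correct and follows the same route the paper takes: the paper offers no explicit proof but simply writes ``From the discussion above we obtain the main result of this section'', and the discussion in question consists precisely of Theorem~\ref{thm:CharEsakia}, its algebraic counterpart (including the commutative-square criterion for preservation of the co-Heyting operation), and the remark that $j\colon L\to B$ is dual to $i\colon X_p\to X$. Your write-up is a faithful unpacking of that derivation, including the pointwise reading of the distributor square; the caution you flag about matching $j_k^+$ with $i_k^*$ via the adjunction in Theorem~\ref{thm:CharEsakia}(iii) is exactly the right way to pin down the correspondence.
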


\section{Generalised Esakia spaces as idempotent split completion}\label{sect:GenEsakia}

With the results of the last section in mind, we would like to conclude that $\GESADIST$ is the idempotent split completion of the category $\COMPHAUSREL$. This follows indeed from Theorem \ref{thm:CharEsakia}, as soon as we know that the category $\STCOMPDIST$ is idempotent split complete. Similarly to the case of spectral spaces, it is easier to argue in the dual category; and the following result is essentially in \citep{JKM01}. We write $\STCONTDLAT_{\bigvee,\ll}$ to denote the category of continuous distributive lattices where the way-below relation is stable under finite infima and maps preserving suprema and the way-below relation. Note that every continuous distributive lattice is a frame.

\begin{theorem}
The category $\STCOMPDIST$ is dually equivalent to the category $\STCONTDLAT_{\bigvee,\ll}$.
\end{theorem}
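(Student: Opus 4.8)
The plan is to establish the dual equivalence $\STCOMPDIST \simeq \STCONTDLAT_{\bigvee,\ll}^{\op}$ by extending the classical dual equivalence $\STCOMP \simeq$ (a suitable category of continuous distributive lattices and certain maps) along the Kleisli construction for the lower Vietoris monad $\mV$. Recall from Section \ref{sect:Vietoris} that $\STCOMPDIST \simeq (\STCOMP_\mV)^{\co}$, so it suffices to identify the Kleisli category of $\mV$ on $\STCOMP$ with $\STCONTDLAT_{\bigvee,\ll}^{\op}$ (up to the appropriate $\co$/$\op$ bookkeeping, which I would track carefully throughout). On the algebraic side, a stably locally compact space $X$ corresponds to the frame $\calO X$ of its opens, which is a continuous distributive lattice whose way-below relation is stable under finite infima (this is precisely the lattice-theoretic content of stable local compactness); spectral maps correspond to frame homomorphisms that additionally preserve $\ll$. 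The first step is therefore to recall this object-level correspondence and the description of $\calO(VX)$: by the definition of the lower Vietoris topology, $\calO(VX)$ is the free frame-with-$\ll$-preservation structure generated by $\calO X$ in the sense made precise in \citep{JKM01}, i.e.\ $V$ on $\STCOMP$ corresponds dually to the monad on $\STCONTDLAT_{\bigvee,\ll}$ whose algebras are known.

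The second step is to translate the Kleisli morphisms. A morphism $X \modto Y$ in $\STCOMP_\mV$ is a spectral map $X \to VY$; dually this is a $\STCONTDLAT_{\bigvee,\ll}$-morphism $\calO(VY) \to \calO X$, and precomposing with the (dual of the) unit $e_Y$ exhibits such a map as determined by its restriction along the canonical generating inclusion $\calO Y \to \calO(VY)$. Since that inclusion is the universal $\bigvee$-preserving $\ll$-preserving map into a frame, a spectral map $X\to VY$ corresponds exactly to a map $\calO Y \to \calO X$ preserving arbitrary suprema and the way-below relation — that is, a morphism of $\STCONTDLAT_{\bigvee,\ll}$ in the direction $\calO Y \to \calO X$. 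One then checks that Kleisli composition (which uses $m_Y$ and $Vf$) goes over, under this translation, to ordinary composition of such maps; this is a diagram chase using that $\calO(m_Y)$ and $\calO(Vf)$ are the structure maps of the dual monad. This produces a full and faithful functor $\STCOMP_\mV \to \STCONTDLAT_{\bigvee,\ll}^{\op}$, essentially surjective because every continuous distributive lattice with stable $\ll$ is $\calO X$ for some stably locally compact $X$ (classical). Composing with $\STCOMPDIST \simeq (\STCOMP_\mV)^{\co}$ and recording that $(-)^{\co}$ on the $2$-category side and $(-)^{\op}$ on the $1$-category side agree at the level of underlying categories yields the claim.

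The main obstacle I anticipate is the precise identification of $\calO(VX)$ and of the monad it carries: one must verify that the lower Vietoris functor is dual to the free-algebra monad of \citep{JKM01} on the nose — in particular that the sets $U^\Diamond$ generate exactly the frame freely built from $\calO X$ subject to $\bigvee$- and $\ll$-preservation, and that the unit $e_X\colon X\to VX$ and multiplication $m_X\colon VVX\to VX$ (which are spectral by Proposition \ref{prop:PropLowVietoris}) dualize to the insertion of generators and the multiplication of that monad. Granting this — which is where I would lean most heavily on \citep{JKM01} — the remaining verifications (functoriality, fullness, faithfulness, compatibility of composition, and the $\co$/$\op$ bookkeeping) are routine diagram chases. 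I would also remark that, since $\COMPHAUS \hookrightarrow \STCOMP$ is the Kleisli-compatible sub-setting dual to restricting along frames of the form $\calO X$ with $X$ compact Hausdorff, the same argument specializes to the Halmos-type dual equivalence for $\COMPHAUSREL$, which is what is needed in the next section.
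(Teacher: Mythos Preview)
The paper does not prove this theorem; it simply records it as ``essentially in \citep{JKM01}'' and moves on. Your plan is therefore not competing with any argument in the paper --- rather, you have sketched what a proof via the Kleisli description would look like. That sketch is sound in outline: the identification $\STCOMPDIST\simeq\STCOMP_\mV$ at the level of underlying $1$-categories, the object-level correspondence $X\mapsto\calO X$ between stably locally compact spaces and continuous distributive lattices with stable $\ll$, and the translation of a spectral map $X\to VY$ into a $\bigvee$- and $\ll$-preserving map $\calO Y\to\calO X$ via the generating inclusion $U\mapsto U^\Diamond$ is the right strategy, and it is in the spirit of what \citep{JKM01} does.

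You have correctly isolated the one non-routine step: the universal property of $\calO(VY)$. Concretely, you need that frame homomorphisms $\calO(VY)\to\calO X$ which preserve $\ll$ correspond bijectively (and naturally) to maps $\calO Y\to\calO X$ preserving suprema and $\ll$. This is not automatic from the usual free-frame-on-a-suplattice description of the lower powerlocale: one must check that $U\mapsto U^\Diamond$ preserves $\ll$ (it does, using stable compactness of $Y$) and, more substantially, that the $\ll$-preservation constraint on frame maps out of $\calO(VY)$ is \emph{detected} on the generators $U^\Diamond$. That second point is where the actual work of \citep{JKM01} sits; your plan treats it as a black box, which matches the paper's treatment, but be aware that if you wanted a self-contained proof this is the step that requires genuine argument. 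The remaining verifications you list (compatibility of Kleisli composition with ordinary composition, essential surjectivity, the $\co$/$\op$ bookkeeping) are indeed routine.
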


\begin{proposition}
The category $\STCONTDLAT_{\bigvee,\ll}$ is idempotent split complete.
\end{proposition}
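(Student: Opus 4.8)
The plan is to split an idempotent $e\colon A\to A$ of $\STCONTDLAT_{\bigvee,\ll}$ by its fixed points, exactly as one argues for $\DLAT_{\bot,\vee}$. So put $B=\{a\in A\mid e(a)=a\}$, equipped with the order induced from $A$, let $s\colon B\hookrightarrow A$ be the inclusion and $r\colon A\to B$ the corestriction of $e$; then $r\cdot s=1_B$ and $s\cdot r=e$ are immediate, and the whole content lies in checking that $B$ is an object of $\STCONTDLAT_{\bigvee,\ll}$ and that $s$ and $r$ are morphisms there.

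First I would describe the lattice structure of $B$. Since $e$ preserves suprema, $e(\bigvee^A S)=\bigvee^A\{e(x)\mid x\in S\}=\bigvee^A S$ for every $S\subseteq B$, so $B$ is closed under suprema in $A$; hence $B$ is a complete lattice with the induced suprema, $s$ preserves suprema, and so does $r=e$. The infima of $B$ are \emph{not} inherited from $A$, but one checks that $\bigwedge^B S=e(\bigwedge^A S)$, this being the largest element of $B$ below $S$. Two observations now do the work. First, this formula together with $e$ preserving \emph{binary} suprema turns the finite distributive law of $A$ into that of $B$:
\[ a\wedge^B(b\vee^B c)=e\bigl((a\wedge^A b)\vee^A(a\wedge^A c)\bigr)=(a\wedge^B b)\vee^B(a\wedge^B c). \]
Second, $e$-preservation of $\ll$ gives $b=e(b)=\bigvee^A\{e(a)\mid a\ll_A b\}$ for $b\in B$, with every $e(a)\in B$ and $e(a)\ll_A b$; as $\{c\in B\mid c\ll_A b\}$ is up-directed and its members are way-below $b$ in $B$ as well (directed suprema of $B$ being computed in $A$), $B$ is continuous, and $\ll_B$ coincides with $\ll_A$ on $B$ — conversely any $c\in B$ with $c\ll_B b$ lies below some $c'$ with $c'\ll_A b$. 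Hence $s$ and $r$ preserve $\ll$, and $\ll_B$ inherits stability under finite infima from $\ll_A$ (via $\bigwedge^B=e\circ\bigwedge^A$, and, for the empty infimum, applying $e$-preservation of $\ll$ to $\top_A\ll_A\top_A$). Being a continuous distributive lattice, $B$ is a frame; so $B$ lies in $\STCONTDLAT_{\bigvee,\ll}$ and $(s,r)$ splits $e$.

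The point that needs care — and what I would spell out — is that $B$ is not a sublattice of $A$: its meets are the $e$-images of the meets of $A$. What makes everything go through is, on the one hand, that this twisted meet structure is still compatible with the genuinely inherited suprema because $e$ preserves suprema, which is what yields distributivity and hence the frame property; and, on the other hand, that $e$ preserves $\ll$, which is precisely what forces $\ll_B$ and $\ll_A$ to agree on $B$, and is thus needed both for continuity of $B$ and for stability of $\ll_B$ under finite infima. One could instead transport the statement to $\STCOMPDIST$ through the duality and argue with the Vietoris presentation of spectral distributors, but the direct verification above seems the quickest route.
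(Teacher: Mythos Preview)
Your proof is correct and follows essentially the same approach as the paper's: split $e$ via its fixed points (equivalently, in the category of sup-lattices), then use continuity of $A$ together with $e$'s preservation of $\ll$ to control the way-below relation on the retract. The paper phrases the key step as ``$s$ preserves $\ll$'' whereas you phrase it as ``$\ll_B$ agrees with $\ll_A$ on $B$'', but the underlying argument---writing $b$ as the directed join of $\{e(a)\mid a\ll_A b\}$ and reading off the consequence---is the same in both cases.
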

\begin{proof}
Let $e\colon L\to L$ be an idempotent morphism in $\STCONTDLAT_{\bigvee,\ll}$. Then $e$ splits in the category of sup-lattices and sup-preserving maps, that is, there is a complete lattice $M$ and sup-preserving maps $r\colon L\to M$ and $s\colon M\to L$ so that $e=s r$ and $r s=1_M$. Then $M$ is certainly a distributive lattice, and, since the embedding $s\colon M\to L$ preserves suprema, for all $x,y\in M$ one has
\[
 s(x)\ll s(y)\hspace{1em}\Rw\hspace{1em} x\ll y.
\]
Consequently, since $e\colon L\to L$ preserves the way-below relation, so does $r\colon L\to M$. We show now that $s\colon M\to L$ preserves the way-below relation. To this end, let $x\ll y$ in $M$. Since $L$ is a continuous lattice,
\[
 s(y)=\bigvee\{b\in L\mid b\ll s(y)\},
\]
and note that $\{b\in L\mid b\ll s(y)\}$ is directed. Hence, $y=r s(y)$ is the directed supremum of $\{r(b)\in L\mid b\ll s(y)\}$. Therefore there exist some $b\ll s(y)$ with $x\le r(b)$ and, since $e$ preserves the way-below relation, we obtain
\[
 s(x)\le sr(b)=e(b)\ll e(s(y))=s(y).
\]
This shows that $s$ preserves the way-below relation, and from that it follows that $M$ is a continuous lattice. Finally, we prove that the way-below relation in $M$ is stable under finite infima. Note that $r(\top)=\top$ since $r$ is surjective, therefore, since $\top\ll\top$ in $L$, we obtain $\top\ll\top$ in $M$. Let now $x\ll x'$ and $y\ll y'$ in $M$. Then
\[
 x\wedge y=r(s(x)\wedge s(y))\ll r(s(x')\wedge s(y'))=x'\wedge y'.\qedhere
\]
\end{proof}

\begin{corollary}
The category $\GESADIST$ is the idempotent split completion of $\COMPHAUSREL$.
\end{corollary}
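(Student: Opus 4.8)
The plan is to run exactly the argument that was used for the spectral case (the corollary identifying $\ESADIST$ with the idempotent split completion of $\STONEREL$), with $\SPECDIST\simeq\DLAT_{\bot,\vee}^\op$ replaced by the dual equivalence $\STCOMPDIST\simeq\STCONTDLAT_{\bigvee,\ll}^\op$ of the preceding theorem. The first step is to record that $\STCOMPDIST$ is idempotent split complete. The preceding proposition shows that $\STCONTDLAT_{\bigvee,\ll}$ is idempotent split complete, and the property ``every idempotent splits'' is self-dual: a factorisation $e=sr$ with $rs=1$ of an idempotent in a category yields a factorisation of the same shape in the opposite category. Hence the dual equivalence transports idempotent split completeness from $\STCONTDLAT_{\bigvee,\ll}$ to $\STCOMPDIST$. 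Recall also from Section~\ref{sect:Vietoris} that $\COMPHAUSREL$ is a \emph{full} subcategory of $\STCOMPDIST$.

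The second step is the general categorical fact that drives the conclusion: if $\mathcal{D}$ is an idempotent split complete category and $\mathcal{C}\hookrightarrow\mathcal{D}$ a full subcategory, then the full subcategory of $\mathcal{D}$ spanned by all retracts (split subobjects) of objects of $\mathcal{C}$ is the idempotent split completion of $\mathcal{C}$. I would prove this by the usual Karoubi-envelope bookkeeping. Every idempotent $e\colon A\to A$ on an object of $\mathcal{C}$ is in particular an idempotent in $\mathcal{D}$, hence splits there as $A\xrightarrow{\,r\,}E\xrightarrow{\,s\,}A$ with $sr=e$ and $rs=1_E$; the assignment $(A,e)\mapsto E$ on objects and $f\mapsto r'fs$ on a Karoubi morphism $f\colon(A,e)\to(A',e')$ defines a functor from the Karoubi envelope of $\mathcal{C}$ into $\mathcal{D}$ extending the inclusion of $\mathcal{C}$. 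One checks it is fully faithful, the inverse on hom-sets being $g\mapsto s'gr$, using the relations $rs=1_E$, $r's'=1_{E'}$, $sr=e$, $s'r'=e'$ and $e'fe=f$. Moreover the splitting object of an idempotent on a $\mathcal{C}$-object is precisely a retract of that object and, conversely, a retract $E$ of some $A\in\mathcal{C}$ in $\mathcal{D}$ is the splitting of the idempotent $sr\colon A\to A$, which lies in $\mathcal{C}$ because $\mathcal{C}$ is full in $\mathcal{D}$; so the essential image of the functor is exactly the retract-closure of $\mathcal{C}$ in $\mathcal{D}$, which is therefore equivalent to the Karoubi envelope of $\mathcal{C}$.

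The third step is to specialise to $\mathcal{D}=\STCOMPDIST$ and $\mathcal{C}=\COMPHAUSREL$ and read off the retract-closure. By the last condition of Theorem~\ref{thm:CharEsakia}, an object $X$ of $\STCOMPDIST$ is a split subobject of a compact Hausdorff space in $\STCOMPDIST$ if and only if $X$ is an Esakia space; hence the retract-closure of $\COMPHAUSREL$ in $\STCOMPDIST$ is precisely the full subcategory $\GESADIST$. Combining this with the second step yields that $\GESADIST$ is the idempotent split completion of $\COMPHAUSREL$.

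The main obstacle I expect is the careful part of the second step: verifying that the comparison functor out of the Karoubi envelope is fully faithful with essential image exactly the retract-closure. This is standard but slightly fiddly, and it is worth isolating the precise role of fullness of $\mathcal{C}$ in $\mathcal{D}$, namely that it is what guarantees the idempotent $sr$ associated with a retract of a $\mathcal{C}$-object is itself a morphism of $\mathcal{C}$. Everything else — the self-duality of idempotent split completeness and the translation through Theorem~\ref{thm:CharEsakia} — is immediate.
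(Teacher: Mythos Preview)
Your proposal is correct and follows precisely the route the paper takes: combine Theorem~\ref{thm:CharEsakia} with the fact that $\STCOMPDIST$ is idempotent split complete (obtained from the dual equivalence with $\STCONTDLAT_{\bigvee,\ll}$ and the preceding proposition). The paper leaves the Karoubi-envelope bookkeeping of your second step implicit, but otherwise the argument is the same.
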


\section{Split algebras}\label{sect:SplitAlgebras}

In the previous section we have seen that the Kleisli category $\STCOMP_\mV$ is idempotent split complete; consequently, the full subcategory of $\STCOMP^\mV$ defined by the free algebras $VY$ is idempotent split complete. This seems to be a rare case, and in general the idempotent split completion of the free algebras for a monad defines an interesting class of algebras, called split algebras (see \citep{RW04}). Most notably, for the up-set monad\footnote{equivalently, the down-set monad} on the category of ordered sets and monotone functions, the split algebras are precisely the completely distributive complete lattices (see \cite{FW90} and \citep{RW94}). In this section we will relate the free algebras for the lower Vietoris monad on $\STCOMP$ with the split algebras for other monads in topology.

To start, recall that the \df{filter monad} $\mF=\fmonad$ on $\TOP$ consists of 
\begin{itemize}
\item the functor $F\colon \TOP\to\TOP$ where, for a topological space $X$, $FX$ is the set of all filters on the lattices of opens of $X$ equipped with the topology generated by the sets $A^\#=\{\ff\in FX\mid A\in\ff\}$ ($A\subseteq X$ open), and, for $f\colon X\to Y$, the map $Ff$ sends a filter $\ff$ on the opens of $X$ to the filter $\{B\subseteq Y\mid f^{-1}[B]\in\ff\}$ on the opens of $Y$;
\item and the natural transformations $e\colon 1\to F$ and $m\colon FF\to F$ are given by 
\begin{align*}
&&e_X(x)&=\doo{x}=\{A\subseteq X\mid x\in A\}&\text{and}&&m_X(\fF)&=\{A\subseteq X\mid A^\#\in\fF\}, 
\end{align*}
for all topological spaces $X$, $\fF\in FFX$ and $x\in X$.
\end{itemize}
We also note that the filter monad on $\TOP$ is of Kock-Z\"oberlein type, dual to the case of the lower Vietoris monad: $Fe_X\le e_{FX}$ for all topological spaces $X$. A filter $\ff$ in the lattice of opens of $X$ is called \df{prime} whenever, for $A,B\subseteq X$ open, $A\cup B\in \ff$ implies $A\in\ff$ or $B\in\ff$. We denote by $\mFp=\fpmonad$ the submonad of $\mF$ of defined by prime filters. For more information we refer to \citep{Esc97}.

As shown in \citep{Day75}, the category of Eilenberg--Moore algebras of the filter monad $\mF$ on $\TOP$ is equivalent to the category $\CONTLAT$ of continuous lattices and Scott-continuous and $\inf$-preserving maps; this latter category is introduced in \citep{Sco72}. We also recall from \citep{Sim82} that $\TOP^{\mFp}$ is the category $\STCOMP$; in particular, there is a forgetful functor $\TOP^\mF\to\STCOMP$. In \citep{Wyl81} it is shown that the category of Eilenberg--Moore algebras for the Vietoris monad $\mtV$ on the category $\COMPHAUS$ of compact Hausdorff spaces and continuous maps is equivalent to $\CONTLAT$, hence:
\[
 \TOP^\mF\simeq\COMPHAUS^{\mtV}.
\]
A slight generalisation of this result is presented in \citep{Hof14}.

\begin{proposition}
The functor $(-)^\op\colon\STCOMP\to\STCOMP$ restricts to an equivalence $\TOP^\mF\simeq\STCOMP^\mV$.
\end{proposition}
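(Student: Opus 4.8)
The plan is to exhibit mutually inverse equivalences between $\TOP^\mF$ and $\STCOMP^\mV$ by using the functor $(-)^\op$ together with the already-established correspondence of Kleisli categories. The key structural input is the chain of equivalences $\TOP^\mF\simeq\COMPHAUS^{\mtV}\simeq\CONTLAT$ (via \citep{Day75}, \citep{Wyl81}, \citep{Sco72}) on one side and, on the other, the fact from the previous section that $\STCOMP_\mV$ is idempotent split complete, so that the full subcategory of $\STCOMP^\mV$ on the free algebras $VY$ is idempotent split complete and hence all of $\STCOMP^\mV$ is recovered as splittings of idempotents on free algebras. First I would observe that, since $\mV$ is of Kock--Z\"oberlein type on $\STCOMP$ (Proposition \ref{prop:PropLowVietoris}(1)), a $\mV$-algebra structure on a stably locally compact space $X$ is unique if it exists, and is given by a sup-preserving (in the appropriate sense) map $VX\to X$; dually, a $\mF$-algebra structure on a space in $\TOP$ is unique if it exists. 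This reduces the claim to a statement about objects and morphisms of the underlying categories: $X$ carries a $\mV$-algebra structure (in $\STCOMP$) if and only if $X^\op$ carries a $\mF$-algebra structure (in $\TOP$), and a spectral map is a $\mV$-homomorphism if and only if its $(-)^\op$-image is a $\mF$-homomorphism.

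The second step is to pin down the action of $(-)^\op$ on free algebras. For a stably locally compact $Y$, the free $\mV$-algebra is $(VY, m_Y)$; applying $(-)^\op$ one should check that $(VY)^\op$ is, as a topological space, the filter space $FY^{\op}$ — more precisely, that under the isomorphism $\ORDCH\simeq\STCOMP$ the space $VY$ with its opposite order corresponds to the space of (prime, or suitably completed) filters appearing in the description of the free $\mF$-algebra, and that $m_Y^\op$ corresponds to the multiplication of $\mF$. This is the calculational heart of the argument: one must match the lower-Vietoris topology on closed subsets of $Y$, passed through the patch construction and order-reversal, with the filter topology generated by the sets $A^\#$. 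The cleanest route is probably to use the descriptions already recalled in the excerpt — $\TOP^{\mFp}\simeq\STCOMP$, the generalised patch adjunction $\STCOMP\adjunct{}{}\COMPHAUS$, and the characterisation of $VX$'s convergence in Lemma \ref{lem:VietorisConv} — rather than computing topologies by hand; then $(-)^\op$ intertwines the "closed-set / suprema" picture with the "filter / infima" picture essentially by De Morgan duality.

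The third step is to promote this bijection on free algebras to an equivalence of categories by invoking idempotent split completion on both sides. Since $\STCOMP^\mV$ (equivalently, by the previous section, $\STCOMPDIST^\co$ via $\STCOMP_\mV$) is the idempotent split completion of its full subcategory on free algebras, and likewise $\TOP^\mF$ — which by \citep{Day75} is $\CONTLAT$, known to be idempotent split complete — is generated under splitting of idempotents by the free algebras $FY$, it suffices to have an equivalence between the two categories of free algebras that is natural enough to extend. The functor $(-)^\op$ supplies exactly this: it sends $VY$ to $FY^\op$ (up to the identifications above), sends a Kleisli/homomorphism-arrow to its opposite, and is its own inverse up to coherent isomorphism since $(X^\op)^\op=X$. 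Idempotents correspond to idempotents and splittings to splittings, so the equivalence propagates to the full categories.

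The main obstacle I expect is precisely the second step: getting the identification $(VY)^\op\cong FY$ (or $\mFp Y$, with the right handling of which filters occur) right at the level of \emph{topology and algebra structure}, not merely underlying sets and orders. The Kock--Z\"oberlein property removes all questions about morphisms being homomorphisms and about uniqueness of structure, so no genuine difficulty lies there; and the idempotent-completion bookkeeping is formal once the previous section is granted. What takes care is checking that order-reversal, which is only an isomorphism $\STCOMP\to\STCOMP^\co$, correctly swaps the free $\mV$-algebra for the free $\mF$-algebra — in particular that the subtlety "$\TOP^{\mFp}=\STCOMP$ but $\TOP^\mF\simeq\CONTLAT$" (prime filters versus all filters) is reconciled with "$VY$ = all closed sets" on the Vietoris side. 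I would handle this by working through the equivalence $\TOP^\mF\simeq\COMPHAUS^{\mtV}$ and its generalisation from \citep{Hof14}, where $\mtV$ is $\mV$ transported along the patch adjunction, so that $(-)^\op$ and the patch functor together make the square of four categories commute.
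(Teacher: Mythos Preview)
The paper does not actually prove this proposition: it is stated as a result from \citep{Hof14}, presented as a ``slight generalisation'' of Wyler's equivalence $\TOP^\mF\simeq\COMPHAUS^{\mtV}$. There is therefore no detailed argument in the text to compare your proposal against.

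That said, your strategy has a genuine gap in its third step. You claim that ``$\STCOMP^\mV$ \dots\ is the idempotent split completion of its full subcategory on free algebras'', invoking the fact that $\STCOMP_\mV$ is idempotent split complete. But the implication runs the wrong way: if the Kleisli category (equivalently, the full subcategory of free algebras inside the Eilenberg--Moore category) is already idempotent split complete, then splitting idempotents on free algebras gives back exactly the free algebras --- it does \emph{not} recover all of $\STCOMP^\mV$. In the language of Section~\ref{sect:SplitAlgebras}, this says precisely that $\Spl(\STCOMP^\mV)$ coincides with the free algebras; it does not say that every $\mV$-algebra is split. The same problem arises on the filter side: the paper identifies $\Spl(\TOP^\mF)$ as the filter spaces of frames, which is a proper subclass of $\CONTLAT\simeq\TOP^\mF$. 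Worse, the paper explicitly notes just after this proposition that the restriction of $(-)^\op$ to split algebras is \emph{not} an equivalence $\Spl(\TOP^\mF)\to\Spl(\STCOMP^\mV)$. So an idempotent-completion argument cannot produce the full equivalence of Eilenberg--Moore categories.

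Your first step, by contrast, is the right idea and is presumably close to the actual argument in \citep{Hof14}: since both monads are of Kock--Z\"oberlein type (in dual senses), algebra structures are unique when they exist and are detected by an adjointness condition on the unit. A direct proof should show that $e_X\colon X\to VX$ admits a left adjoint in $\STCOMP$ if and only if $X^\op$ carries an $\mF$-algebra structure, by unpacking both conditions concretely (for instance via the characterisation of continuous lattices). Note also that your proposed identification $(VY)^\op\cong FY$ in step two is too optimistic as stated: the paper's formula $FX=(V(F_p X)^\op)^\op$ shows that the free $\mF$-algebra involves a prior application of $F_p$, so the free algebras for the two monads do not line up under $(-)^\op$ in the naive way.
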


Since $\STCOMP\simeq\TOP^{\mFp}$, we conclude that the filter monad $\mF$ on $\TOP$ is isomorphic to the composite monad of $\mFp$ and $\mV$ via a distributive law, and that $FX=(V(F_p(X)^\op))^\op$ for each topological space $X$.

Following \citep{RW04}, for a monad $\monadfont{D}$ on a category $\catfont{C}$ where idempotents split we consider the full subcategory $\Spl(\catfont{C}^\monadfont{D})$ of $\catfont{C}^\monadfont{D}$ defined by the \df{split algebras}, that is, by those $\monadfont{D}$-algebras $X$ with algebra structure $\alpha\colon DX\to X$ for which exists a homomorphism $t\colon X\to DX$ with $\alpha t=1_X$. If $\monadfont{D}$ is of Kock-Z\"oberlein type, these splittings are adjoint to the algebra structure. Put differently, $\Spl(\catfont{C}^\monadfont{D})$ is the idempotent split completion of the full subcategory of $\catfont{C}^\monadfont{D}$ defined by the free algebras.

The split algebras for various filter monads are studied in \citep{Hof13a} where they are characterised by a disconnectedness condition. In particular:

\begin{proposition}
The split algebras for the prime filter monad are precisely those stably compact spaces $X$ where, for every open subset of $X$, its closure in the patch topology is open in $X$. The split algebras for the filter monad are precisely the filter spaces of frames.
\end{proposition}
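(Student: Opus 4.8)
The plan is to identify the split algebras as a full subcategory of $\TOP^{\mFp} \simeq \STCOMP$ characterised by the same disconnectedness condition that appears throughout the previous section, namely: a stably compact space $X$ is a split algebra for $\mFp$ precisely when the patch-closure of every open set is open. First I would use that $\mFp$ is of Kock--Zöberlein type, so that a $\mFp$-algebra structure $\alpha\colon F_p X \to X$ always has a left adjoint $t = \alpha^*$ (in the sense of the 2-categorical structure on $\TOP$), and that $X$ is a split algebra exactly when this adjoint $t$ is itself a homomorphism, equivalently when $t \dashv \alpha$ in the Kleisli sense corresponds to an adjunction in $\STCOMP$. Since under the isomorphism $\TOP^{\mFp} \simeq \STCOMP$ the unit $e_X\colon X \to F_p X$ corresponds to the spectral map $i\colon X_p \to X$ from the patch space (this is the prime-filter analogue of the lower Vietoris situation, by the $(-)^\op$ duality $\TOP^\mF \simeq \STCOMP^\mV$ combined with Proposition~\ref{prop:PropLowVietoris}), the splitting condition becomes exactly the condition that $i$ admits a suitable adjoint on the Kleisli/distributor level.

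Next I would invoke Theorem~\ref{thm:CharEsakia}, or rather its dual/patch-closure analogue: the characterisation there says that $i_*\colon X_p \modto X$ has a right adjoint iff $i$ is downwards open iff the down-closure of every patch-open set is open. For the prime filter monad the orientation is reversed (the filter monad is Kock--Zöberlein \emph{dual} to the lower Vietoris monad, $Fe_X \le e_{FX}$), so the relevant statement is about $i_*$ having a \emph{left} adjoint, equivalently about $i^*\colon X \modto X_p$ being a morphism, which translates into: the patch-\emph{closure} (rather than the down-closure) of every open subset of $X$ is open. Running through this translation — Kock--Zöberlein type $\Rightarrow$ the splitting is the adjoint $\Rightarrow$ the adjoint is a homomorphism iff the patch-closure condition holds — gives the first sentence of the proposition.

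For the second sentence, I would use the decomposition $FX = (V(F_p(X)^\op))^\op$ together with the fact that $\mF$ is the composite of $\mFp$ and $\mV$ via a distributive law. A split algebra for $\mF$ then amounts to a split algebra for $\mV$ equipped with compatible structure, and since the split algebras for $\mV$ on $\STCOMP$ are exactly the free algebras $VY$ (as noted at the start of this section, $\STCOMP_\mV$ is idempotent split complete, so $\Spl$ adds nothing new to the free $\mV$-algebras), one gets that the $\mF$-split algebras are precisely the spaces of the form $F X = (V(F_p X^\op))^\op$ for $X$ a split $\mFp$-algebra — but by Day's theorem $\TOP^\mF \simeq \CONTLAT$, and the free filter-algebra on a set/frame is its filter space, so this unwinds to ``filter spaces of frames''. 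I expect the main obstacle to be bookkeeping the two opposite Kock--Zöberlein orientations correctly: one must be careful that passing to $\mFp$ flips ``down-closure of open is open'' (the Esakia condition) into ``patch-closure of open is open'', and that the $(-)^\op$ functor intervening between $\mV$ and $\mF$ does not reintroduce a further flip. Verifying that these two dualisations interact exactly as claimed — rather than cancelling or compounding — is the delicate point; everything else is a routine transport along the established equivalences $\TOP^{\mFp}\simeq\STCOMP$ and $\TOP^\mF\simeq\STCOMP^\mV$.
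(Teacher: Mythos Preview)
The paper does not prove this proposition: it is quoted from \citep{Hof13a} with no argument beyond the attribution, so there is no in-paper proof to compare against.

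Judged on its own, your outline for the first claim has the right shape --- use the (co-)Kock--Z\"oberlein property to reduce ``split'' to an adjointness condition and then read that off topologically --- but the central identification is wrong. You assert that under $\TOP^{\mFp}\simeq\STCOMP$ the unit $e_X\colon X\to F_pX$ ``corresponds to'' the patch inclusion $i\colon X_p\to X$. It does not: $F_pX$ is the space of prime open filters of $X$, not the patch space (they need not even have the same underlying set), and the two maps go in opposite directions. What one actually has to analyse is when $e_X\colon X\to F_pX$ admits a further left adjoint in $\TOP$, and that computation is carried out directly in \citep{Hof13a}; it does not factor through Theorem~\ref{thm:CharEsakia}.

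For the second claim your argument has a genuine gap. From the equivalence $\TOP^\mF\simeq\STCOMP^\mV$ and the fact that $\Spl(\STCOMP^\mV)$ consists of the free $\mV$-algebras you infer that every split $\mF$-algebra is of the form $FX=(V((F_pX)^\op))^\op$ with $X$ a split $\mFp$-algebra. But the paper notes, immediately after this proposition, that the comparison $(-)^\op\colon\Spl(\TOP^\mF)\to\Spl(\STCOMP^\mV)$ is \emph{not} an equivalence: knowing that a split $\mF$-algebra maps to some $VY$ does not tell you that $Y$ is of the form $(F_pX)^\op$, nor that $X$ is itself $\mFp$-split. The characterisation as filter spaces of frames in \citep{Hof13a} goes instead through a lattice-theoretic splitting argument of the kind appearing in Lemma~\ref{lem:FrameInDLat}, not through the monad decomposition you propose.
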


We note that every split algebra for $\mFp$ is a spectral space. The following lemma is easy to prove (see \citep{Hof13a}, for instance).

\begin{lemma}\label{lem:FrameInDLat}
Let $L$ be a distributive lattice. Then the following assertions are equivalent.
\begin{eqcond}
\item $L$ is a coframe.
\item $L$ is a split subobject in $\DLAT$ of the lattice $\calC(X)$ of closed subsets, for some topological space $X$.
\item $L$ is a split subobject in $\DLAT_{\bot,\vee}$ of the lattice $\calC(Y)$ of closed subsets, for some topological space $Y$.
\end{eqcond}
\end{lemma}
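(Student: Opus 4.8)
The plan is to prove the cycle of implications (ii)$\Rw$(iii)$\Rw$(i)$\Rw$(ii). The first implication is trivial: a splitting of $L$ through $\calC(X)$ in $\DLAT$ is in particular a splitting of $L$ through $\calC(X)$ in the larger category $\DLAT_{\bot,\vee}$.

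For (iii)$\Rw$(i), let $s\colon L\to\calC(Y)$ and $r\colon\calC(Y)\to L$ be morphisms of $\DLAT_{\bot,\vee}$ with $rs=1_L$. Since $\calC(Y)$ is isomorphic to $\calO(Y)^\op$ and $\calO(Y)$ is a frame, $\calC(Y)$ is a coframe; in particular it is complete. The first step is to observe that $L$ is complete as well, with $\bigwedge^L D=r(\bigwedge^{\calC(Y)}s[D])$ for every $D\subseteq L$: the right-hand side is a lower bound of $D$ because $r$ is monotone and $rs=1_L$, and it is the greatest lower bound because, moreover, $s$ is monotone. The second step is the co-distributive law in $L$, which follows from the co-distributive law in $\calC(Y)$ together with the fact that $r$ and $s$ preserve finite joins: for $a\in L$ and $D\subseteq L$ one computes $a\vee\bigwedge^L D=r(s(a)\vee\bigwedge^{\calC(Y)}s[D])=r(\bigwedge^{\calC(Y)}_{d\in D}s(a\vee d))=\bigwedge^L\{a\vee d\mid d\in D\}$. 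Combined with the trivial implication (ii)$\Rw$(iii), this also gives (ii)$\Rw$(i).

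The substantial implication is (i)$\Rw$(ii). Here I would take $X=\spec(L^\op)$, the spectral space dual to the distributive lattice $L^\op$; its points are the proper prime ideals of $L$, and the sets $\widehat a=\{I\in X\mid a\in I\}$ (for $a\in L$) form a base of compact open sets. By primeness of the ideals one has $\widehat{a\vee b}=\widehat a\cap\widehat b$, $\widehat{a\wedge b}=\widehat a\cup\widehat b$, $\widehat\bot=X$ and $\widehat\top=\varnothing$, so that $s\colon L\to\calC(X)$ defined by $s(a)=X\setminus\widehat a$ is a morphism of $\DLAT$; it is injective, indeed an order embedding, because $L$ has enough prime ideals. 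For the retraction I would set $r\colon\calC(X)\to L$, $r(C)=\bigwedge^L\{b\in L\mid C\subseteq s(b)\}$, which is well defined since the coframe $L$ is complete, and $rs=1_L$ because $s$ is an order embedding (so $r(s(a))=\bigwedge^L\{b\mid a\le b\}=a$). It then remains to check that $r$ is a morphism of $\DLAT$. Preservation of $\bot$ and $\top$ is immediate. Preservation of finite joins is exactly where hypothesis (i) enters: writing $A_C=\{b\mid C\subseteq s(b)\}$, one has $A_{C_1\cup C_2}=A_{C_1}\cap A_{C_2}$, while $b_1\vee b_2\in A_{C_1\cup C_2}$ whenever $b_i\in A_{C_i}$, so the co-distributive law in $L$ gives $r(C_1)\vee r(C_2)=\bigwedge^L\{b_1\vee b_2\mid b_i\in A_{C_i}\}=r(C_1\cup C_2)$. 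Preservation of finite meets is where the topology of $X$ is used: if $C_1\cap C_2\subseteq s(b)$, then the compact open set $\widehat b$ is contained in the union of the basic opens making up $X\setminus C_1$ and $X\setminus C_2$; extracting a finite subcover and using $\widehat u\cup\widehat v=\widehat{u\wedge v}$ produces $u,v\in L$ with $r(C_1)\le u$, $r(C_2)\le v$ and $u\wedge v\le b$, whence $r(C_1)\wedge r(C_2)\le b$ (the reverse inequality $r(C_1\cap C_2)\le r(C_1)\wedge r(C_2)$ being immediate). This establishes (i)$\Rw$(ii) and closes the cycle.

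I expect the main obstacle to be this last implication, and within it the verification that the retraction $r$ preserves binary meets: this is the only place where the order-theoretic information about $L$ — here merely its completeness — has to be combined with a genuinely topological ingredient, namely the compactness of the basic opens of $\spec(L^\op)$. The remaining verifications are routine once the space $X$ and the formulas for $s$ and $r$ have been chosen correctly.
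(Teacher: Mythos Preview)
Your argument is correct. The paper itself does not supply a proof of this lemma; it simply refers to \citep{Hof13a} and declares the result ``easy to prove''. So there is nothing to compare your approach against directly.

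A couple of small remarks. In (iii)$\Rw$(i) you might make explicit that completeness of $L$ under arbitrary meets already gives completeness under arbitrary joins, so that ``coframe'' is fully justified. In (i)$\Rw$(ii), the verification that $r$ preserves binary meets is exactly right, but note the edge case where the finite subcover of $\widehat b$ uses basic opens from only one of the two sides; then the corresponding $u$ (or $v$) is the empty meet $\top$, and the inequality $r(C_1)\le u$ is trivial. With that caveat made explicit, the compactness argument goes through as you wrote it. Your choice of $X=\spec(L^\op)$ is the natural one and corresponds, under Stone duality, to exhibiting $L$ as a retract of the coframe of closed subsets of its own dual space.
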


The result above allows us to recover the description of frames inside Priestley (resp.\ Stone) duality of \citep{PS88}. Note that those spaces which are split algebras for $\mFp$ are called f-spaces there.

\begin{corollary}
A stably compact space $X$ is a split algebra for $\mFp$ if and only if $X$ is spectral and its lattice $\calS X$ of compact open subsets of $X$ is a frame.
\end{corollary}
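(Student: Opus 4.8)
The plan is to push the assertion across Stone's duality $\SPEC^\op\simeq\DLAT$ and reduce it to Lemma~\ref{lem:FrameInDLat}. Recall from the discussion of $\Spl(\TOP^{\mFp})$ that the split algebras for $\mFp$ are exactly the retracts, taken inside $\TOP^{\mFp}=\STCOMP$, of the free algebras $F_p Y$, and that each such space is spectral. Consequently the ``only if'' implication of the statement is essentially for free, and it suffices to show that for a \emph{spectral} space $X$ the condition ``$X$ is a retract of some $F_p Y$'' is equivalent to ``$\calS X$ is a frame''.

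First I would identify the free algebras concretely: $F_p Y$ is the set of prime filters of the frame $\calO Y$ with the topology generated by the sets $\{\ff\mid A\in\ff\}$ for $A\subseteq Y$ open, which is precisely the prime spectrum of the distributive lattice $\calO Y$; hence $\calS(F_p Y)\cong\calO Y$, a frame. (This also re-proves that every $F_p Y$ is spectral.) Next I would invoke $\SPEC^\op\simeq\DLAT$, $Z\mapsto\calS Z$: since $X$ and the spaces $F_p Y$ are spectral, a retraction $X\to F_p Y\to X$ in $\STCOMP$ is the same as one in the full subcategory $\SPEC$, and under the duality this corresponds exactly to a retraction $\calO Y\cong\calS(F_p Y)\to\calS X\to\calO Y$ in $\DLAT$. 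Thus $X$ is a split algebra for $\mFp$ if and only if $\calS X$ is a split subobject in $\DLAT$ of $\calO Y$ for some space $Y$. Finally I would feed this into Lemma~\ref{lem:FrameInDLat}: a distributive lattice $L$ is a frame precisely when $L^\op$ is a coframe, and $\calC(Y)^\op\cong\calO(Y)$, so the equivalence (i)$\Leftrightarrow$(ii) of that lemma, applied to $(\calS X)^\op$, says exactly that $\calS X$ is a split subobject in $\DLAT$ of some $\calO(Y)$ if and only if $\calS X$ is a frame. Combining the displayed equivalences closes the argument.

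The only genuinely new ingredient is the identification $\calS(F_p Y)\cong\calO Y$; the rest is bookkeeping, the most delicate point being the (routine) observation that the idempotent-split-completion description of split algebras may be read off inside $\SPEC$ once $X$ and the relevant free algebras are known to be spectral. If one prefers to avoid Lemma~\ref{lem:FrameInDLat} altogether, there is an entirely elementary alternative starting from the proposition above (split $\mFp$-algebras $=$ spectral spaces in which the patch-closure of every open set is open in the space): for such an $X$ one checks that $\calS X$ is closed under arbitrary joins inside $\calO X$, the join of a family $(K_i)$ being the patch-closure of $\bigcup_i K_i$, which is compact and open by hypothesis, and that the infinite distributive law then holds because intersecting with a patch-clopen set commutes with patch-closure; conversely, if $\calS X$ is a frame, the supremum $\bigvee_i K_i$ computed in $\calS X$ is forced to equal the patch-closure of $\bigcup_i K_i$, which yields the hypothesis. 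In either route I expect the main effort to be nothing more than care in juggling the spectral topology against the patch topology, and in tracking which category the splittings live in.
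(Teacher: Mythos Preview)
Your approach is essentially the paper's own: reduce to the spectral case, push the splitting through Stone duality using $\calS(F_pY)\cong\calO Y$, and invoke Lemma~\ref{lem:FrameInDLat} after dualising. One small slip: under the contravariant equivalence the retraction $X\to F_pY\to X$ becomes $\calS X\to\calO Y\to\calS X$ (not $\calO Y\to\calS X\to\calO Y$ as you wrote), though your stated conclusion that $\calS X$ is a split subobject of $\calO Y$ is the correct one.
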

\begin{proof}
First note that every split algebra $X$ for $\mFp$ is spectral since $X$ is a subspace of a free algebra $F_pY$ ($Y$ in $\TOP$) in $\STCOMP\simeq\TOP^{\mFp}$. Since $\SPEC\simeq\DLAT^\op$, a spectral space $X$ is a split subobject of some $F_pY$ in $\SPEC$ if and only if $\calS X$ is a split subobject of $\calS F_p(Y)\simeq\calO Y$ in $\DLAT$.
\end{proof}

On the other side, the split algebras for the lower Vietoris monad on $\STCOMP$ are precisely the free algebras $VY$ since $\STCOMP_\mV$ is idempotent split complete. It is also observed in \citep{Hof14} that the equivalence functor $(-)^\op\colon \TOP^\mF\to \STCOMP^\mV$ restricts to a functor
\[
 (-)^\op\colon\Spl(\TOP^\mF)\to\Spl(\STCOMP^\mV);
\]
but this functor is not an equivalence since $X^\op$ is spectral for every $X$ in $\Spl(\TOP^\mF)$, but $VY$ is spectral if and only if $Y$ is spectral (see Proposition \ref{prop:PropLowVietoris}). This leaves us with the problem to characterise those spectral spaces $X$ so that $VX$ is in the image of the functor above.

\begin{theorem}
Let $X$ be a spectral space. Then $(VX)^\op$ is a split algebra for the filter monad $\mF$ on $\TOP$ if and only if $X^\op$ is a split algebra for the prime filter monad $\mFp$ on $\TOP$.
\end{theorem}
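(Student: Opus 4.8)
The plan is to exploit the structural decomposition $FX=(V(F_p(X)^\op))^\op$ established just before the statement, together with the fact that both $\mF$ and $\mFp$ are of Kock--Z\"oberlein type and that the relevant categories have split idempotents. Recall that a split algebra is, by definition, the splitting of an idempotent on a free algebra; since the monads are of Kock--Z\"oberlein type, an object $Z$ in $\TOP^\mF$ is a split algebra precisely when its algebra structure $\alpha\colon FZ\to Z$ has a section $t\colon Z\to FZ$ in $\TOP^\mF$, equivalently $t\dashv\alpha$. So I want to relate a section of the $\mF$-structure on $(VX)^\op$ to a section of the $\mFp$-structure on $X^\op$.

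First I would make precise the algebra structures in play. For a spectral space $X$, the space $VX$ is a free $\mV$-algebra on $X$ in $\STCOMP$, with structure $m_X\colon VVX\to VX$; dually $(VX)^\op$ carries the free $\mF$-algebra structure transported along $(-)^\op\colon\TOP^\mF\to\STCOMP^\mV$. Likewise $X^\op$ is (by $\STCOMP\simeq\TOP^{\mFp}$) a free $\mFp$-algebra on the underlying set, or more usefully: $X$ spectral means $X^\op$ is a stably compact space and hence an $\mFp$-algebra. Using $FX=(V(F_p(X)^\op))^\op$, the free $\mF$-algebra on $X$ is $(V(F_pX^\op)^\op)$, and setting $Y=X^\op$ this reads: the free $\mF$-algebra generated by the $\mFp$-algebra data of $X$ is $(V\,F_p X^\op)^\op = (V\,F_p Y)^\op$. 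The point is then that $(VX)^\op$ is a split $\mF$-algebra iff the idempotent on this free algebra whose splitting is $(VX)^\op$ lives in the image of $V(-)^\op$, and $V$ being a $2$-functor preserves adjunctions; so a section of the $\mF$-structure of $(VX)^\op$ should correspond, under $V$, to a section of the $\mV$-structure of $X$ in $\STCOMP$ — but that always exists (Kock--Z\"oberlein), which is the wrong conclusion, so the correspondence must instead pass through the prime filter layer, relating a section for $(VX)^\op$ to a section for the $\mFp$-algebra $X^\op$ at the level of the distributive law decomposing $\mF$ as $\mV$ composed with $\mFp$.

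Concretely, the key steps I would carry out are: (1) write the distributive law $\lambda\colon F_p V\to V F_p$ (in the dualised picture) witnessing $FX=(V F_p X^\op)^\op$, and recall that $\mF$-algebras then decompose as pairs of an $\mFp$-algebra and a $\mV$-algebra structure compatible via $\lambda$; (2) observe that a free $\mF$-algebra $FX$ corresponds to the free such pair, so a section $t\colon (VX)^\op\to F((VX)^\op)$ in $\TOP^\mF$ unpacks into section data at the $\mV$-level (automatic) and at the $\mFp$-level; (3) show that the $\mFp$-level section datum is exactly a section of the $\mFp$-algebra structure on $X^\op$, using that $V$ reflects the relevant adjunctions and that $(-)^\op$ is an involution exchanging the two Kock--Z\"oberlein orientations; (4) run the argument backwards, lifting a section of $X^\op$ in $\TOP^{\mFp}$ through the distributive law to a section of $(VX)^\op$ in $\TOP^\mF$. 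The $2$-categorical bookkeeping — that all these maps are monotone/spectral and that adjunctions transport correctly along $V$, $(-)^\op$, and the distributive law — is routine given Proposition~\ref{prop:PropLowVietoris} and the discussion preceding the theorem.

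**Main obstacle.**
The hard part will be step (3): making rigorous that the compatibility of an $\mF$-algebra section with the distributive law forces its $\mFp$-component to be precisely a section of the $\mFp$-structure on $X^\op$ — i.e. that no ``extra'' freedom is introduced by the Vietoris layer. Equivalently, one must show that the square expressing compatibility of the section $t$ with $\lambda$, once the $\mV$-component is pinned down as the canonical (adjoint) section $e_{VX}$, collapses to the statement that the $\mFp$-component splits. This is where the Kock--Z\"oberlein property of \emph{both} monads is essential: it rigidifies the sections (they are unique as adjoints), so the decomposition of $t$ is forced rather than merely possible, and the biconditional follows. I would isolate this as a lemma about sections of algebras for a Kock--Z\"oberlein monad arising as a composite via a distributive law, and then the theorem is an immediate instance with $\catfont{C}=\TOP$, the inner monad $\mFp$, the outer monad $\mV$ (dualised), and $X$ spectral so that everything stays inside $\STCOMP$.
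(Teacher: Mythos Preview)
Your approach has a genuine gap, and it stems from a misidentification of where the real difficulty lies. You frame the problem as ``decompose a section of the composite monad $\mF$ into sections of the factors $\mFp$ and $\mV$ via the distributive law''. But even granting your proposed lemma about composite Kock--Z\"oberlein monads (which you do not prove and which is not standard), this would not close the argument. The issue is that ``$(VX)^\op$ is a split $\mF$-algebra'' unwinds, after applying $(-)^\op$ and passing to the Kleisli picture, to: \emph{$X$ is a retract of some $(F_pY)^\op$ in $\SPECDIST$}. On the other hand, ``$X^\op$ is a split $\mFp$-algebra'' means: \emph{$X^\op$ is a retract of some $F_pY$ in $\STCOMP$}, i.e.\ $X$ is a retract of $(F_pY)^\op$ in $\SPEC$. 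These are retract conditions in \emph{different} categories --- spectral distributors versus spectral maps --- and your distributive-law bookkeeping gives no mechanism to pass between them. Your own warning sign (``that always exists, which is the wrong conclusion'') is a symptom of this: the $\mV$-layer is not where the content is, and peeling it off via KZ rigidity lands you in $\SPECDIST$, not in $\SPEC$.

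The paper's proof handles exactly this point by moving to the lattice side. After reducing to ``$X$ is a retract of $(F_pY)^\op$ in $\SPECDIST$'', it applies the duality $\SPECDIST\simeq\DLAT_{\bot,\vee}^\op$ to rephrase this as ``$\calS X$ is a split subobject of $\calC Y$ in $\DLAT_{\bot,\vee}$''. The crucial step is then Lemma~\ref{lem:FrameInDLat}, which says that for the particular lattices $\calC Y$, being a split subobject in $\DLAT_{\bot,\vee}$ is equivalent to being one in $\DLAT$, and both are equivalent to being a coframe. This is precisely the bridge between the two retract conditions above, and it is a genuinely order-theoretic fact with no obvious monadic avatar. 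Combined with the earlier corollary identifying split $\mFp$-algebras with spectral spaces whose compact-open lattice is a frame, the result follows. Your plan contains no analogue of this step, and without it the biconditional cannot be closed.
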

\begin{proof}
For a spectral space $X$, $(VX)^\op$ is a split algebra for $\mF$ if and only if there is a topological space $Y$ and morphisms 
\[
 \xymatrix@=8ex{(VX)^\op\ar@<1mm>[r]^-{s} & \ar@<1mm>[l]^-{r} (V((F_p Y)^\op))^\op}
\]
in $\TOP^\mF$ with $r s =1$. Hence,
\[
 \xymatrix@=8ex{VX\ar@<1mm>[r]^-{s^\op} & \ar@<1mm>[l]^-{r^\op} V((F_p Y)^\op)}
\]
$\SPEC^\mV$, which corresponds to spectral distributors
\[
 \xymatrix@=8ex{X\ar@<1mm>|-{\object@{o}}[r]^-{\varphi} & \ar|-{\object@{o}}@<1mm>[l]^-{\psi} (F_p Y)^\op}
\]
with $\psi\cdot\varphi=1_X$ in $\SPECDIST$. Since $\SPECDIST\simeq\DLAT_{\bot,\vee}^\op$, this is equivalent to $\calS X$ being a split subobject of $(\calS F_p Y)^\op\simeq\calC Y$ in $\DLAT_{\bot,\vee}^\op$, which in turn is equivalent to $\calS(X^\op)$ being a frame.
\end{proof}

%

\end{document}